\newtheorem{theorem}{Theorem}[section]
\newtheorem{lemma}[theorem]{Lemma}
\newtheorem{proposition}[theorem]{Proposition}
\theoremstyle{definition}
\newtheorem{example}[theorem]{Example}
\theoremstyle{remark}
\newtheorem{remark}[theorem]{Remark}
\newtheorem{conjecture}[theorem]{Conjecture}
\numberwithin{equation}{section}
\newcommand{\C}{ \mathbb C }
\newcommand{\U}{\mathfrak{u}}
\newcommand{\hf}{\frac12}
\newcommand{\la}{\lambda}
\newcommand{\spo}{\mathfrak{spo}}
\newcommand{\vdim}{{\rm vdim}}
\newcommand{\Z}{ \mathbb Z }
\newcommand{\gl}{\mathfrak{gl}}
\newcommand{\G}{\mathfrak{g}}
\newcommand{\p}{\mathfrak{p}}
\newcommand{\lev}{\mathfrak{l}}
\newcommand{\h}{\mathfrak{h}}
\begin{document}
\title[ortho-symplectic Lie superalgebras] {Remarks on modules of the ortho-symplectic Lie superalgebras}

\author[Shun-Jen Cheng]{Shun-Jen Cheng}
\address{Institute of Mathematics, Academia Sinica, Taipei, Taiwan
11529}\email{chengsj@math.sinica.edu.tw}
\author[Weiqiang Wang]{Weiqiang Wang}
\address{Department of Mathematics, University of Virginia,
Charlottesville, VA 22904} \email{ww9c@virginia.edu}

\begin{abstract}
We examine in detail the Jacobi-Trudi characters over the
ortho-symplectic Lie superalgebras $\spo(2|2m+1)$ and $\spo(2n|3)$.
We furthermore relate them to Serganova's notion of Euler
characters.
\end{abstract}

\keywords{Lie superalgebras, finite-dimensional representations,
ortho-symplectic superalgebras, Euler characters, Jacobi-Trudi
characters}

\subjclass[2000]{17B10, 17B55}

 \maketitle
\date{}

\section{Introduction}

The representation theory of Lie superalgebras over $\C$ and that of
modular representations of algebraic groups share similarities in
their lack of complete reducibility and difficulties of finding
irreducible characters. There has been significant progress in
representation theory of Lie superalgebras over $\C$ in recent
years, thanks largely to the works of Brundan, Serganova, and
others, mostly for the so-called type I classical Lie superalgebras
in the classification of Kac \cite{K1} and the queer Lie
superalgebras.

However, the representation theory of the Lie superalgebras
$\spo(2n|\ell)$ with $n\ge 1$ and $\ell \ge 3$ (which are of type II
classical in the sense of \cite{K1}) turns out to be much more
challenging. In her fundamental paper \cite{Ser}, Serganova
announced in 1998 an algorithm of finding the irreducible characters
of $\spo(2n|\ell)$. In the simplest yet already rather nontrivial
case of $\spo(2|3)$, the irreducible characters have been calculated
by Van der Jeugt \cite{VdJ} and Germoni \cite{Ger} using different
approaches (also see Gruson \cite{Gru} and Santos \cite{San} for
related developments).

This Note arises from our attempt to understand Serganova's work
\cite{Ser}, by examining mainly various aspects of the
representation theory of $\spo(2n|3)$ and $\spo(2|2m+1)$. The notion
of Euler characters, which are alternating sums of certain sheaf
cohomology groups, plays a key role in Serganova's theory. In
Section~\ref{sec:Euler}, we introduce the Jacobi-Trudi characters,
imitating the determinant character formula for classical Lie
algebras of type $B, C, D$ (cf.~e.g.~\cite{BB, BLR, CK, KT}). In
general, the Euler characters and the Jacobi-Trudi characters are
only virtual $\spo$-characters. We show that while being different
in general, the Euler characters coincide with the Jacobi-Trudi
characters for a large class of highest weights for $\spo(2n|3)$,
and hence demystify somewhat the notion of Euler characters. Also
some of our Euler character calculations look surprising in light of
\cite{Ser}.

In Section~\ref{sec:tensor}, we determine in a number of cases
when the kernel of a Laplacian on a exterior tensor power of the
natural module is irreducible or reducible. By computations, we
also establish some explicit tensor product decomposition formula
for $\spo(2|2m+1)$-modules.

In the case of $\spo(2|3)$, we provide and compare the explicit
formulas for the ``composition factors" and the virtual dimensions
of the Euler characters, Kac characters, and the Jacobi-Trudi
characters, respectively. We also write down the composition
factors of the tensor products of any irreducible
$\spo(2|3)$-module with the natural module. We end the Note with a
conjecture on a relation betwen the Euler characters of
$\spo(2n|2m+1)$ with respect to the parabolic sublagebra whose
Levi subalgebra is $\gl(n|m)$ and the Kac characters of
$\gl(n|m)$. All these are treated in Section~\ref{sec:examples}.

{\bf Acknowledgment.} S-J.C.~is partially supported by an NSC grant
of the R.O.C.~and an Academia Sinica Investigator grant. The
research of W.W. is partially supported by the NSA and NSF grants.
We thank Ruibin Zhang for many helpful discussions. We also
gratefully acknowledge the support and hospitality of the
NCTS/Taipei Office, the National Taiwan University, the University
of Sydney and the University of Virginia.

\section{The Euler and Jacobi-Trudi characters}
\label{sec:Euler}
\subsection{Preliminaries}

Throughout this paper we shall denote by $\G =\G_{\bar{0}} \oplus
\G_{\bar{1}}$ the ortho-symplectic Lie superalgebra
$\spo(2n|\ell)$ for $n \ge 1$ and $\ell \ge 3$, whose respective
standard Dynkin diagrams and simple roots are as follows (where we
write $\ell =2m+1$ or $2m$):

\setlength{\unitlength}{0.16in}
\begin{picture}(24,5)
\put(5,2){\makebox(0,0)[c]{$\bigcirc$}}
\put(7.4,2){\makebox(0,0)[c]{$\bigcirc$}}
\put(11.85,2){\makebox(0,0)[c]{$\bigcirc$}}
\put(14.25,2){\makebox(0,0)[c]{$\bigotimes$}}
\put(16.4,2){\makebox(0,0)[c]{$\bigcirc$}}
\put(20.9,2){\makebox(0,0)[c]{$\bigcirc$}}
\put(23.3,3.8){\makebox(0,0)[c]{$\bigcirc$}}
\put(23.3,0.2){\makebox(0,0)[c]{$\bigcirc$}}
\put(5.35,2){\line(1,0){1.5}} \put(7.82,2){\line(1,0){0.8}}
\put(10.2,2){\line(1,0){1.2}} \put(12.28,2){\line(1,0){1.45}}
\put(14.7,2){\line(1,0){1.25}} \put(16.81,2){\line(1,0){0.9}}
\put(19.1,2){\line(1,0){1.28}} \put(21.35,2){\line(1,1){1.55}}
\put(21.35,2){\line(1,-1){1.55}}
\put(9.5,1.95){\makebox(0,0)[c]{$\cdots$}}
\put(18.5,1.95){\makebox(0,0)[c]{$\cdots$}}
\put(5,3){\makebox(0,0)[c]{$\delta_1-\delta_2$}}
\put(7.4,1){\makebox(0,0)[c]{$\delta_2-\delta_3$}}
\put(11.5,3){\makebox(0,0)[c]{$\delta_{n-1}-\delta_n$}}
\put(14.15,1){\makebox(0,0)[c]{$\delta_n-\epsilon_1$}}
\put(16.5,3){\makebox(0,0)[c]{$\epsilon_1-\epsilon_2$}}
\put(26.3,3.8){\makebox(0,0)[c]{$\epsilon_{m-1}-\epsilon_m$}}
\put(26.3,0.2){\makebox(0,0)[c]{$\epsilon_{m-1}+\epsilon_m$}}
\end{picture}
\bigskip

\setlength{\unitlength}{0.16in}
\begin{picture}(24,4)
\put(5,2){\makebox(0,0)[c]{$\bigcirc$}}
\put(7.4,2){\makebox(0,0)[c]{$\bigcirc$}}
\put(11.85,2){\makebox(0,0)[c]{$\bigcirc$}}
\put(14.25,2){\makebox(0,0)[c]{$\bigotimes$}}
\put(16.4,2){\makebox(0,0)[c]{$\bigcirc$}}
\put(21,2){\makebox(0,0)[c]{$\bigcirc$}}
\put(23.5,2){\makebox(0,0)[c]{$\bigcirc$}}
\put(5.35,2){\line(1,0){1.5}} \put(7.82,2){\line(1,0){0.8}}
\put(10.2,2){\line(1,0){1.2}} \put(12.28,2){\line(1,0){1.45}}
\put(14.7,2){\line(1,0){1.25}} \put(16.81,2){\line(1,0){0.9}}
\put(19.25,2){\line(1,0){1.28}}
\put(21.35,1.75){$\Longrightarrow$}
\put(9.5,1.95){\makebox(0,0)[c]{$\cdots$}}
\put(18.5,1.95){\makebox(0,0)[c]{$\cdots$}}
\put(5,1){\makebox(0,0)[c]{$\delta_1-\delta_2$}}
\put(7.4,3){\makebox(0,0)[c]{$\delta_2-\delta_3$}}
\put(11.5,1){\makebox(0,0)[c]{$\delta_{n-1}-\delta_n$}}
\put(14.15,3){\makebox(0,0)[c]{$\delta_n-\epsilon_1$}}
\put(16.5,1){\makebox(0,0)[c]{$\epsilon_1-\epsilon_2$}}
\put(20.5,3){\makebox(0,0)[c]{$\epsilon_{m-1}-\epsilon_m$}}
\put(23.3,1){\makebox(0,0)[c]{$\epsilon_{m}$}}
\end{picture}
\medskip

\noindent Here and further we use the standard notation for the
simple roots of the ortho-symplectic Lie superalgebra, i.e.
$\{\epsilon_1,\ldots,\epsilon_m,\delta_1,\ldots,\delta_n\}$ denote
the standard dual basis of the standard Cartan subalgebra $\h$
equipped with a symmetric bilinear form determined by
$(\epsilon_i,\epsilon_j)=-\delta_{ij}$, $(\delta_i,\epsilon_j)=0$,
and $(\delta_i,\delta_j)=\delta_{ij}$. Recall that an odd root
$\alpha$ is called {\em isotropic} if $(\alpha,\alpha)=0$. Note that
$\spo(2n|2m)_{\bar{0}} =\mathfrak{sp}(2n) \oplus \mathfrak{so}(2m)$
and $\spo(2n|2m+1)_{\bar{0}} =\mathfrak{sp}(2n) \oplus
\mathfrak{so}(2m+1)$.

In concrete matrix form, the Lie superalgebra $\spo(2n|2m+1)$
consists of the $(2n+2m+1) \times (2n+2m+1)$ matrices in the
following $(n|n|m|m|1)$-block form
\begin{eqnarray} \label{matrixSPO}
g = \left[
    \begin{array}{rrrrr}
    d      &   e     & y_1^t  & x_1^t &  z_1^t       \\
    f      &  -d^t   & -y^t   & - x^t &  -z^t  \\
    x      & x_1     &  a     &  b    &  -v^t  \\
    y      & y_1     &  c     & -a^t  &  -u^t   \\
    z      & z_1    & u      &  v    &    0
    \end{array}
    \right],
\end{eqnarray}
where  $b, c$  are skew-symmetric, and $e, f$ are symmetric
matrices. The remaining $a,d,x,y,x_1,y_1, z, z_1, u, v$ are
arbitrary matrices of respective sizes. Similarly, the Lie
superalgebra $\spo(2n|2m)$ consists of the $(2n+2m) \times
(2n+2m)$ matrices that are obtained from $g$ of the form
(\ref{matrixSPO}) with the last row and column deleted. The
natural $\G$-module will be denoted by $V$.

Let $\la=\sum_{i=1}^n{a_i}\delta_i+\sum_{j=1}^m
b_j\epsilon_j\in\h^*$ with $a_i,b_j\in\Z$.  Denote by $L(\la)$ the
highest weight irreducible $\G$-module of highest weight $\la$. The
following goes back to \cite{K1}.

\begin{proposition}\label{finite-dim}
Let $\la=\sum_{i=1}^n{a_i}\delta_i+\sum_{j=1}^m b_j\epsilon_j$ be as
above.
\begin{itemize}
\item[(i)] Suppose that $\G=\spo(2n|2m)$.  Then $L(\la)$ is
finite-dimensional if and only if $a_1\ge \cdots\ge a_n\ge 0$,
$b_1\ge \cdots b_{m-1}\ge|b_m|$, and $a_n<m$ implies that
$b_{a_n+1}=\cdots=b_m=0$.
\item[(ii)] Suppose that $\G=\spo(2n|2m+1)$.  Then $L(\la)$ is
finite-dimensional if and only if $a_1\ge \cdots\ge a_n \ge 0$,
$b_1\ge \cdots b_{m-1}\ge b_m\ge 0$, and $a_n<m$ implies that
$b_{a_n+1}=\cdots=b_m=0$.
\end{itemize}
\end{proposition}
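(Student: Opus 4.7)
The plan is to establish necessity and sufficiency separately, both halves leveraging the splitting $\G_{\bar 0}=\mathfrak{sp}(2n)\oplus\mathfrak{so}(\ell)$.

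For necessity, I would first exploit a structural feature of the distinguished Borel $\B\subset\G$ drawn above: its even positive root vectors coincide with the positive root vectors of the induced Borel of $\G_{\bar 0}$. Consequently the $\G$-highest weight vector $v_\lambda\in L(\lambda)$ is annihilated by every positive root vector of $\G_{\bar 0}$, so it generates a $\G_{\bar 0}$-subrepresentation isomorphic to the irreducible $\G_{\bar 0}$-module of highest weight $\lambda$. Finite-dimensionality of $L(\lambda)$ forces this subrepresentation to be finite-dimensional, hence $\lambda$ to be dominant integral for $\G_{\bar 0}$; unpacking dominance for $\mathfrak{sp}(2n)$ and $\mathfrak{so}(\ell)$ yields $a_1\ge\cdots\ge a_n\ge 0$ together with the stated inequalities on the $b_j$.

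The remaining clause $a_n<m\Rightarrow b_{a_n+1}=\cdots=b_m=0$ is more delicate. Here the plan is to process the isotropic odd roots $\delta_n-\epsilon_{a_n+1},\delta_n-\epsilon_{a_n+2},\ldots$ one at a time via odd reflections: after each such reflection the vector $v_\lambda$ remains, up to an odd-root-vector correction, a highest weight vector for a new Borel, and re-running the $\G_{\bar 0}$-dominance argument in this new Borel forces $b_{a_n+1}=0$, then $b_{a_n+2}=0$, and so on. Tracking the $(a_i,b_j)$-bookkeeping under the odd reflections, especially at atypical boundaries where coincidences among the $a_i$ and $b_j$ arise, is the step I expect to require the most care.

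For sufficiency, given $\lambda$ satisfying all the listed conditions, one must construct a finite-dimensional $\G$-module with highest weight $\lambda$. The principal obstacle is that $\spo(2n|\ell)$ is type II in Kac's classification and hence does not admit a compatible $\Z$-grading $\G=\G_{-1}\oplus\G_0\oplus\G_1$; in particular, the standard Kac-module construction (parabolic induction from a purely odd nilradical) is unavailable. I would proceed instead by induction on $n$ and $m$, anchoring on small-rank base cases in which $L(\lambda)$ can be exhibited as an irreducible component of an explicit tensor construction built from the natural module $V$, and then propagate finite-dimensionality up the rank using the fact that the extra vanishing condition on the $b_j$ is precisely what is needed to kill the would-be infinite string of descendants produced by the odd root vectors in the inductive step.
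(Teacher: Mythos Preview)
The paper does not actually prove this proposition: it is stated with the attribution ``The following goes back to \cite{K1}'' and no argument is given. So there is no proof in the paper to compare your proposal against; the authors are simply quoting a classical result of Kac.

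That said, your outline is a reasonable sketch of how such a result is established, with two caveats. First, in the necessity direction, the roots $\delta_n-\epsilon_{a_n+1},\delta_n-\epsilon_{a_n+2},\ldots$ you propose to reflect at are not simple for the distinguished Borel, so you cannot perform odd reflections at them directly; you would instead have to thread a sequence of odd reflections through successive \emph{simple} isotropic roots (starting with $\delta_n-\epsilon_1$), tracking how the simple system and the highest weight evolve at each step, and only then read off the vanishing of the relevant $b_j$ from $\G_{\bar 0}$-dominance in the new Borels. This is doable but more bookkeeping-intensive than your description suggests. Second, your sufficiency argument is too schematic to evaluate: ``induction on $n$ and $m$ anchored on explicit tensor constructions'' is a plausible strategy, but the inductive step---showing that the extra vanishing condition on the $b_j$ is exactly what truncates the odd-root strings---is the entire content, and you have not indicated how it goes. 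Kac's original argument in \cite{K1} proceeds differently, via an analysis of when the unique simple quotient of the Verma module is finite-dimensional, and that is the reference the paper defers to.
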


A weight $\la\in\h^*$ satisfying the condition in Proposition
\ref{finite-dim} will be called a {\em dominant weight} for $\G$.

\begin{remark}\label{rem:hook}
It can be seen that the set of dominant weights for
$\spo(2n|2m+1)$ is in 1-1 correspondence with the set of
partitions $\la$ with $\la_{n+1}\le m$ as follows.  For such $\la$
let
$\la^\sharp=(\la_1,\ldots,\la_n,\langle\la'_1-n\rangle,\ldots,\langle\la'_m-n\rangle)$,
where $\langle \ell\rangle=\ell$, if $\ell\ge 0$, and zero
otherwise. We may regard $\la^\sharp$ as the weight
$\sum_{i=1}^n\la^\sharp_i\delta_i+\sum_{j=1}^m\la^\sharp_j\epsilon_j$.
The map $\la\rightarrow\la^\sharp$ is a bijection. Similarly the
set of such partitions parameterizes the set of
$\spo(2n|2m)$-dominant weights with $b_m\ge 0$.
\end{remark}

The graded half sum of positive roots $\rho$ is given by
\begin{align*}
\rho&=\sum_{i=1}^n
(i-m)\delta_{n-i+1}+\sum_{j=1}^m(m-j)\epsilon_j,\quad \text{if } \G=\spo(2n|2m),\\
\rho&=\sum_{i=1}^n
(i-m-\hf)\delta_{n-i+1}+\sum_{j=1}^m(m-j+\hf)\epsilon_j,\quad
\text{if } \G=\spo(2n|2m+1).
\end{align*}

Let $\la\in\h^*$. The action of the center $Z(\G)$ of the
universal enveloping algebra $U(\G)$ on $L(\la)$ defines an
algebra homomorphism (called the {\em central character})
$\chi_\la:Z(\G)\rightarrow\C$. The following was first stated in
\cite{K2}, and proved in \cite{Sv2} (also cf. \cite{Go}).

\begin{proposition}\label{central:char}
Let $\la,\mu\in\h^*$ and let $W$ be the Weyl group of $\G$. We have
$\chi_\la=\chi_\mu$ if and only if there exists a sequence of
isotropic roots $\alpha_1,\ldots,\alpha_l$ and $w\in W$ such that
$\mu=w(\la+\rho+\alpha_1+\cdots+\alpha_l)-\rho$, and
$(\la+\rho+\alpha_1+\cdots+\alpha_{s-1},\alpha_s)=0$, for all
$s=1,\ldots,l$.
\end{proposition}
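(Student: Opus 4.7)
The plan is to pass through the Harish-Chandra homomorphism. Define $\gamma: Z(\G) \to S(\h)$ by composing the projection $U(\G) \to S(\h)$ along the triangular decomposition with the usual $\rho$-shift, so that $\chi_\la(z) = \gamma(z)(\la + \rho)$ for every $z \in Z(\G)$. Then $\chi_\la = \chi_\mu$ if and only if $f(\la + \rho) = f(\mu + \rho)$ for every $f \in \gamma(Z(\G))$, and the entire problem reduces to describing the image of $\gamma$ well enough to separate orbits in $\h^*$.

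For the ``if'' direction I would first show that every $f \in \gamma(Z(\G))$ is $W$-invariant and satisfies the supersymmetry identity $f(\nu + t\alpha) = f(\nu)$ for every isotropic root $\alpha$ and every $\nu \in \h^*$ with $(\nu,\alpha) = 0$. The supersymmetry step is the key structural input; it follows by testing against the rank-one sub-superalgebra attached to $\alpha$ and comparing central characters on the corresponding Verma modules (equivalently, by an odd-reflection argument). Granting this, set $\nu_0 = \la + \rho$ and $\nu_s = \la + \rho + \alpha_1 + \cdots + \alpha_s$; the hypothesis $(\nu_{s-1}, \alpha_s) = 0$ gives $f(\nu_s) = f(\nu_{s-1})$ at each step, iteration gives $f(\la + \rho) = f(\nu_l)$, and $W$-invariance gives $f(\nu_l) = f(w\nu_l) = f(\mu + \rho)$. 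Hence $\chi_\la = \chi_\mu$.

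For the ``only if'' direction I would invoke Sergeev's description of $\gamma(Z(\G))$ as exactly the ring $I(\h) \subset S(\h)^W$ of $W$-invariant polynomials satisfying the supersymmetry condition above. With this in hand, the problem becomes: if $f(\la + \rho) = f(\mu + \rho)$ for every $f \in I(\h)$, then $\mu + \rho$ is obtained from $\la + \rho$ by a $W$-element preceded by a legal chain of isotropic shifts. The standard tool is the family of supersymmetric power sums (adapted to the $\spo$ bilinear form, so that $\epsilon_j$ and $\delta_i$ contribute with opposite signs), which are elements of $I(\h)$ and separate multisets of coordinates up to simultaneous cancellation of matched $\delta_i/\epsilon_j$ pairs. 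An induction on the ``atypicality'' (the number of such pairs that must be cancelled) then reduces $\la + \rho$ and $\mu + \rho$ to the typical case, where $I(\h)$ coincides with $S(\h)^W$ and classical Harish-Chandra applies.

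The main obstacle is the ``only if'' direction, and specifically the precise description of $\gamma(Z(\G)) = I(\h)$: one needs to verify both that no relations beyond $W$-invariance and supersymmetry hold for elements in the image, and that $I(\h)$ is in fact rich enough to separate every pair of $\h^*$-points not linked by a chain of $W$-moves and orthogonal isotropic shifts. The combinatorics is further complicated for $\spo(2n|2m)$ by the sign ambiguity $b_m \mapsto -b_m$ in the $\mathfrak{so}(2m)$ Weyl group, and for $\spo(2n|2m+1)$ by the presence of the short simple root $\epsilon_m$; both of these must be handled carefully when matching isotropic-shift orbits to invariant values, which is where the arguments of \cite{Sv2} and \cite{Go} do the real work.
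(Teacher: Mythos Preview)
The paper does not give a proof of this proposition at all: it is stated as a known result, attributed to Kac \cite{K2} for the statement and to Sergeev \cite{Sv2} (cf.\ also Gorelik \cite{Go}) for the proof. Your outline---Harish--Chandra homomorphism, $W$-invariance plus the supersymmetry condition on the image, then Sergeev's identification of $\gamma(Z(\G))$ with $I(\h)$ and the separation argument via supersymmetric power sums---is essentially the standard route taken in those references, so there is nothing to compare against within the paper itself.
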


Recall \cite{K1} that $\la$ is called {\em typical} if
$(\la+\rho,\alpha)\not=0$, for all isotropic root $\alpha$.
Otherwise $\la$ is called {\em atypical}.

\subsection{The Euler characters}

Let $\p$ be a parabolic subalgebra of $\G$ with Levi subalgebra
$\lev$ and nilradical $\U$. Write
$\G=\lev\oplus\U\oplus\tilde{\U}$. Let $M$ be a finite-dimensional
irreducible $\lev$-module extended trivially to an irreducible
$\p$-module. A key role in Serganova's algorithm of finding the
irreducible finite-dimensional characters of $\G$ was played by
the notion of Euler characters \cite[(1.2)]{Ser}. By definition,
the Euler characters are alternating sums of certain sheaf
cohomology groups that themselves are finite-dimensional
$\G$-modules, and hence are in general not $\G$-characters but
only virtual $\G$-characters. According to \cite{Ser}, a formula
for the Euler character is given by
\begin{equation*}
E^{\mathfrak{p}}(M)=D\sum_{w\in W}(-1)^{l(w)}w\left(\frac{e^\rho{\rm
ch}M}{\prod_{\alpha\in\Delta^1_{{\mathfrak
l},+}}(1+e^{-\alpha})}\right),
\end{equation*}
where ${\rm ch} M$ denotes the character of an $\lev$-module $M$,
and 
$$
D= {D_1}/{D_0},
 \quad
  D_1=\prod_{\alpha\in\Delta^1_+}
(e^{\frac{\alpha}{2}}+e^{\frac{-\alpha}{2}}),
 \quad
D_0=\prod_{\alpha\in\Delta^0_+}
(e^{\frac{\alpha}{2}}-e^{\frac{-\alpha}{2}}).
$$
For our purpose, we take this as the definition of some
distinguished virtual $\G$-characters. Furthermore
$\Delta^1_{\mathfrak l,+}$ (respectively $\Delta^0_{\mathfrak l,+}$)
denotes the set of positive odd (respectively even) roots in
$\mathfrak{l}$. The simple $\mathfrak{l}$-module of highest weight
$\la$ will be denoted by $L^0(\la)$.

Let $\mathfrak{b}$ be the standard Borel subalgebra of $\G$ and let
$\mathfrak{h}$ be the corresponding Cartan subalgebra. For a
finite-dimensional highest weight $\la\in\h^*$ let $\C_\la$ be the
one-dimensional $\mathfrak{b}$-module on which $\mathfrak{h}$
transforms by $\la$. We call $K(\la) :=E^{\mathfrak b}(\C_\la)$ the
{\em Kac (virtual) character} (of ``highest weight $\la$"). When
$\la$ is a typical dominant weight, indeed $K(\la)$ is the character
of the simple $\G$-module $L(\la)$ \cite{K3}.

\begin{remark}\label{rem:euler} In \cite[(12)]{San} Santos defined the functor
$\mathcal L_0$ that may be regarded as a super analogue of the
Bernstein-Zuckerman functor (see e.g.~\cite{KV}).  If we denote
the $i$th derived functor by $\mathcal L_i$, then it can be shown
that
$$\sum_{i\ge 0}(-1)^i{\rm ch}\mathcal L_i({\rm Ind}_\p^\G
M)=E^\p(M).$$ Since all the irreducible composition factors of the
finite-dimensional $\G$-module $\mathcal L_i({\rm Ind}_\p^\G M)$
have the same central character by \cite[Proposition 4.5]{San}, it
follows that all irreducible composition factors of $E^\p(M)$ have
the same central character.
\end{remark}

\begin{example}\label{ex:incon}
Consider $\mathfrak{spo}(2|4)$ with the maximal parabolic subalgebra
$\mathfrak{p}$ obtained by removing the simple root
$\epsilon_1+\epsilon_2$.  The Levi subalgebra $\lev$ is isomorphic
to $\mathfrak{l}=\mathfrak{gl}(1|2)$. Let $\C$ be the trivial
module, and let $\C^{1|2}$ be the natural $\gl(1|2)$-module of
highest weight $\delta_1$. One can show that
$$
E^{\mathfrak{p}}(\C) =2\left[{\rm ch} \left(\C\right)\right],
 \quad E^{\mathfrak{p}}(\C^{1|2})={\rm ch}\left(\C^{2|4}\right),\quad
E^{\mathfrak{p}}(L^0(\delta_1+\epsilon_1))={\rm ch}
L(\delta_1+\epsilon_1).
$$
The factor of $2$ in the formula of the first example is
inconsistent with \cite[Theorem 3.3]{Ser}, which in this case
predicts the irreducibility of the Euler character with
non-vanishing cohomology appearing only in degree zero. Also
\cite[Proposition 3.4]{Ser} in this case further seems to imply that
$\C$ should only appear with multiplicity one. On the other hand the
last two examples are consistent with \cite{Ser} \footnote{In a
private e-mail communication Serganova has informed us that she was
aware of these inconsistencies. We were further told that she has
found methods to amend these problems.  We thank her for kindly
sharing this information with us.}.
\end{example}

\begin{example}
Consider $\mathfrak{spo}(2|6)$ with the maximal parabolic subalgebra
$\mathfrak{p}$ obtained by removing the simple root
$\epsilon_2+\epsilon_3$.  The Levi subalgebra $\lev$ is isomorphic
to $\mathfrak{l}=\mathfrak{gl}(1|3)$. Let $\C$ be the trivial module
and let $\C^{1|3}$ be the standard $\gl(1|3)$-module of highest
weight $\delta_1$. One has
$$
E^{\mathfrak{p}}(\C)=2\left[{\rm
ch}\left(\C\right)\right],\quad
E^{\mathfrak{p}}(\C^{1|3})
=2\left[{\rm ch} \left(\C^{2|6}\right)\right],
 \quad
E^{\mathfrak{p}}((S^2(\C^{1|3})))={\rm ch} S^2(\C^{2|6}).
$$
\end{example}

\subsection{The Jacobi-Trudi characters}

Let $\la$ be a partition with $\la_{n+1}\le m$ and let $k$ be the
length of $\la$. We identify $\la^\sharp$ with
$\sum_{i=1}^n\la^\sharp_i\delta_i+\sum_{j=1}^m\la^\sharp_j\epsilon_j$.
The {\em Jacobi-Trudi character} $D(\la^\sharp)$ is defined to be
the determinant of the following matrix
\[\begin{pmatrix}
p_{\la^*}, p_{\la^*+(1^k)}+p_{\la^*-(1^k)},\ldots,
p_{\la^*+(k-1)(1^k)}+p_{\la^*-(k-1)(1^k)}
\end{pmatrix}.\]
Here for a partition $\mu=(\mu_1,\ldots,\mu_k)$ we let
$\mu^*=(\mu_1,\mu_2-1,\ldots,\mu_k-k+1)$, and for a $k$-tuple of
integers $a=(a_1,\ldots,a_k)$, $p_a$ stands for the column vector
$(p_{a_1}, \ldots, p_{a_k})^t$, where $p_{a_j}$ is the character
of $S^{a_j}(V)$. In general the Jacobi-Trudi character is not a
$\G$-character, but only a virtual $\G$-character. See \cite{BLR}
for some related discussions.

\begin{lemma}  \label{lem:p=e}
Let $\la$ be a partition and suppose that $\mu=\la'$ has length
$\ell$. Then $D(\la^\sharp)$ is equal to the determinant of the
matrix
\[\begin{pmatrix}
e_{\mu^*}-e_{\mu^*-2(1^\ell)},
e_{\mu^*+(1^\ell)}-e_{\mu^*-3(1^\ell)},\ldots, e_{\mu^*+(\ell
-1)(1^\ell)}-e_{\mu^*-(\ell+1)(1^\ell)}
\end{pmatrix},\]
where for an $\ell$-tuple of integers $a=(a_1,\ldots,a_\ell)$,
$e_a$ stands for the column vector $(e_{a_1}, \ldots,
e_{a_\ell})^t$, and $e_{a_j}$ is the character of
$\Lambda^{a_j}(V)$.
\end{lemma}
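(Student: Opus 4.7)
The plan is to reduce the identity to the classical Jacobi--Trudi transposition between the ``$h$-form'' and the ``$e$-form'' of Weyl-type character determinants of classical type $B$, $C$, $D$, the underlying algebraic input being the Koszul identity in the character ring of $\G$:
\begin{equation*}
\Bigl(\sum_{a \ge 0} p_a t^a\Bigr) \cdot \Bigl(\sum_{a \ge 0} (-1)^a e_a t^a\Bigr) = 1.
\end{equation*}
This is the representation-theoretic incarnation of the Cauchy identity $H(t) E(-t) = 1$ in symmetric functions, and it follows from the acyclicity of the (super) Koszul complex $S^{\bullet}(V) \otimes \Lambda^{\bullet}(V)$.

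Thanks to this identity, $D(\la^\sharp)$ and the proposed $e$-determinant are both universal polynomial expressions in the abstract families $\{p_a\}$ and $\{e_a\}$ subject only to the Koszul relation above. The asserted equality is therefore a purely formal identity, independent of the specific module $V$, and amounts precisely to the Jacobi--Trudi transposition linking the ``complete-symmetric'' and ``elementary-symmetric'' forms of the classical Weyl character formulas of type $B$, $C$, $D$: conjugating the partition $\la \leftrightarrow \la'$ exchanges the two forms.

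To prove this formal identity I would use the matrix-inversion technique of Macdonald. The bi-infinite Toeplitz matrices $P = (p_{j-i})_{i,j \in \Z}$ and $E = ((-1)^{i-j} e_{j-i})_{i,j \in \Z}$, with the convention $p_a = e_a = 0$ for $a<0$, are mutual inverses. By Cauchy--Binet, each finite minor of $P$ with prescribed row and column indices equals, up to sign, a complementary minor of $E^t$ after a suitable reindexing. Applying this to the specific row/column sets encoding the shifts $\pm j(1^k)$ that appear on the $p$-side, one recovers the $e$-side with shape $\mu = \la'$ and shifts $+j(1^\ell)$ and $-(j+2)(1^\ell)$, as required.

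The main obstacle is the bookkeeping of shifts and signs, especially the asymmetry of the first column on each side: on the $p$-side the first column is $p_{\la^*}$ alone, whereas on the $e$-side it is the difference $e_{\mu^*} - e_{\mu^* - 2(1^\ell)}$. This mirrors the well-known asymmetry in Weyl's character formulas for classical groups of type $B$, $C$, $D$, and is already encoded in the inversion identity above. Translating it into the precise column-difference pattern stated in the lemma is the computational heart of the argument, but it involves no new idea beyond the standard manipulations.
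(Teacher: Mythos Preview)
Your proposal is correct, and both your route and the paper's reduce the lemma to the classical Koike--Terada determinant duality between the $h$-form and the $e$-form for type $B/C/D$ characters; the difference lies in how each passes from the classical setting to the super one. You invoke the universal nature of the identity: since the super $p_a=\mathrm{ch}\,S^a(V)$ and $e_a=\mathrm{ch}\,\Lambda^a(V)$ satisfy the Koszul relation $H(t)E(-t)=1$, any polynomial identity in $h_a,e_a$ holding in the ring of symmetric functions specializes automatically to the character ring of $\G$. The paper instead takes the classical identity (quoted from \cite{KT}) in infinitely many commuting variables $y_1,y_2,\ldots$, applies the involution $\omega$ \emph{only} to the variables $y_{2n+1},y_{2n+2},\ldots$ --- which is precisely what converts $h_k$ into the supersymmetric character $\mathrm{ch}\,S^k(\C^{2n|\ell})$ --- and then specializes the surviving variables to the eigenvalues $e^{\pm\delta_i},e^{\pm\epsilon_j}$ (and $1$ when $\ell$ is odd). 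Your argument is cleaner conceptually and sidesteps the explicit variable bookkeeping; the paper's is more hands-on and makes the link to hook Schur functions visible. You additionally propose to re-derive the classical identity itself via Macdonald's matrix-inversion/Cauchy--Binet technique, whereas the paper simply cites it; this is extra (standard) work on your side, not a gap.
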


\begin{proof} Denote by $|a_{ik}|$ the determinant of a square matrix
$[a_{ik}]$. Recall the classical identity (see
e.g.~\cite[Proposition 2.3.3]{KT})
\begin{align*}
&\left|p_{\la^*}, p_{\la^*+(1^k)}+p_{\la^*-(1^k)},\ldots,
p_{\la^*+(k-1)(1^k)}+p_{\la^*-(k-1)(1^k)}\right|=\\
&\left| e_{\mu^*}-e_{\mu^*-2(1^\ell)},
e_{\mu^*+(1^\ell)}-e_{\mu^*-3(1^\ell)},\ldots, e_{\mu^*+(\ell
-1)(1^\ell)}-e_{\mu^*-(\ell+1)(1^\ell)} \right|,
\end{align*}
where $p_t$ and $e_t$, $t\in\Z_+$, are the complete symmetric and
elementary symmetric functions in the variables $y_1,y_2,\cdots$,
respectively. We regard this identity as a symmetric function
identity in the variables $y_{2n+1},y_{2n+2},\cdots$ and apply the
involution that interchanges the complete and the elementary
symmetric functions (see e.g.~\cite{Mac}).  Now in the resulting
identity we set $y_{2n+2m+1}=y_{2n+2m+2}=\cdots=0$ in the case of
$\spo(2n|2m)$, and
$1-y_{2n+2m+1}=y_{2n+2m+2}=y_{2n+2m+3}=\cdots=0$ in the case of
$\spo(2n|2m+1)$. Next we set $y^{-1}_{j}=y_{n+j}$, for
$j=1,2,\cdots,n$, and $y^{-1}_{2n+j}=y_{2n+m+j}$, $j=1,\ldots,m$.
Finally, setting $e^{\delta_i}=y_i$, and
$e^{\epsilon_j}=y_{2m+j}$, $i=1,\ldots,n$, and $j=1,\ldots,m$ we
obtain the desired identity.
\end{proof}

\subsection{Euler versus Jacobi-Trudi characters}

Define
\begin{align*}
&\varphi_0(z)=\prod_{k=1}^n (1-u_kz)(1-u_k^{-1}z),\\
&\varphi_1(z)=(1+yz)(1+y^{-1}z)(1+z).
\end{align*}
Note we have the following interpretation
\begin{equation} \label{sympower}
\frac{\varphi_1(z)}{\varphi_0(z)} =\sum_l {\rm ch} (S^l(\C^{2n|3}))
z^l.
\end{equation}

\begin{lemma}
The following identities hold:
\begin{align}
 \prod_{i} & \varphi_0(z_i) \cdot
\left|\frac{1}{(1-z_iu_k)(1-z_iu_k^{-1})}\right| \label{cauchy1}\\
& ={\prod_i {z_i^{n-1}}
\left|u^{n-1}+u^{-n+1},u^{n-2}+u^{-n+2},\ldots,1\right|\;
\left|1,z^{-1}+z,,\ldots,z^{n-1}+z^{-n+1}\right|}, \nonumber \\
%
\left|u^{n} \right. & \left.-u^{-n},u^{n-1}-u^{-n+1},\ldots,u-u^{-1}\right| \label{cauchy3}\\
& =\prod_i(u_i-u_i^{-1})
\left|u^{n-1}+u^{-n+1},u^{n-2}+u^{-n+2},\ldots,1\right|, \nonumber
\\
\left| \right. & z^{-1}  \left. -z,\ldots,z^{-n}-z^n\right|  \label{cauchy9} \\
&=(z_n^{-1} -z_n)\prod_{i=1}^{n-1}z_i^{-1}(1 -z_iz_n)(1
-z_iz_n^{-1})\left|z-z^{-1},\ldots,z^{n-1}-z^{-n+1}\right|,
\nonumber
\\
\sum_{l\ge 0} & (u^{l+\hf}-u^{-l-\hf})z^{l-1}
=(1+z^{-1})(u^\hf-u^{-\hf})\frac{1}{(1-uz)(1-u^{-1}z)}.
\label{cauchy4}
\end{align}
\end{lemma}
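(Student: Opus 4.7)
The plan is to prove the four identities one by one, in order of increasing difficulty, since each reduces to a standard manipulation.

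Identity \eqref{cauchy4} is an immediate Laurent-series computation: I would pull out $z^{-1}$, expand the two terms as geometric series to get $z^{-1}\bigl[u^{1/2}/(1-uz)-u^{-1/2}/(1-u^{-1}z)\bigr]$, combine over the common denominator, and simplify the numerator $u^{1/2}(1-u^{-1}z)-u^{-1/2}(1-uz) = (u^{1/2}-u^{-1/2})(1+z)$ to recover the right-hand side.

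For \eqref{cauchy3}, I would use $u^k-u^{-k}=(u-u^{-1})(u^{k-1}+u^{k-3}+\cdots+u^{-k+1})$ to pull $\prod_i(u_i-u_i^{-1})$ out of row $i$ of the left-hand determinant. The $(i,j)$-entry then becomes the telescoping sum $u_i^{n-j}+u_i^{n-j-2}+\cdots+u_i^{-(n-j)}$. Column operations $C_j\to C_j-C_{j+2}$ for $j=1,\ldots,n-2$ collapse each such sum to $u_i^{n-j}+u_i^{-(n-j)}$, while the last two columns remain $u_i+u_i^{-1}$ and $1$, producing the right-hand side.

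For \eqref{cauchy9}, the Chebyshev substitution $a_i:=u_i+u_i^{-1}$ turns each entry $u_i^{n-j}+u_i^{-n+j}$ into a polynomial in $a_i$ of pure leading degree $n-j$, so a unipotent column reduction converts $\left|u^{n-1}+u^{-n+1},\ldots,1\right|$ into the Vandermonde $\prod_{i<j}(a_i-a_j)$. Combined with \eqref{cauchy3}, this yields the product formula for the symplectic Weyl denominator
\[\det[u_i^j-u_i^{-j}]_{i,j=1}^n = \prod_{i=1}^n(u_i-u_i^{-1})\prod_{1\le i<j\le n}(a_i-a_j).\]
Applying this formula to both the $n\times n$ and the $(n-1)\times(n-1)$ $z$-determinants in \eqref{cauchy9}, together with the identity $z_i^{-1}(1-z_iz_n)(1-z_iz_n^{-1})=(z_i+z_i^{-1})-(z_n+z_n^{-1})$ applied to the explicit prefactor, reduces \eqref{cauchy9} to elementary bookkeeping of signs.

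The substantive identity is \eqref{cauchy1}. The crux is the factorization $(1-uz)(1-u^{-1}z)=z\bigl[(z+z^{-1})-(u+u^{-1})\bigr]$, which applied entrywise gives
\[\varphi_0(z_i)\cdot\frac{1}{(1-z_iu_k)(1-z_iu_k^{-1})} = \prod_{k'\neq k}(1-z_iu_{k'})(1-z_iu_{k'}^{-1}) = z_i^{n-1}\prod_{k'\neq k}(v_i-a_{k'}),\]
where $v_i=z_i+z_i^{-1}$ and $a_k=u_k+u_k^{-1}$. Thus the left-hand side of \eqref{cauchy1} equals $\prod_i z_i^{n-1}\cdot\det[q_k(v_i)]$ with $q_k(v)=\prod_{k'\neq k}(v-a_{k'})$ the Lagrange-basis polynomial. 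This determinant evaluates to $\prod_{i<j}(v_j-v_i)\prod_{k<l}(a_k-a_l)$: writing $[q_k(v_i)]=V\cdot P$ with $V=[v_i^{l-1}]$ the Vandermonde and $P$ the coefficient matrix of the $q_k$'s in the monomial basis, one has $\det V=\prod_{i<j}(v_j-v_i)$, while $\det P=\prod_{k<l}(a_k-a_l)$ follows by comparing the diagonal matrix $[q_k(a_l)]$ (which has $q_k(a_k)=\prod_{k'\neq k}(a_k-a_{k'})$ on the diagonal since $q_k(a_{k'})=0$ for $k'\neq k$) with the Vandermonde $[a_l^{m-1}]$. On the right-hand side of \eqref{cauchy1}, the two determinants become Vandermondes $\prod_{i<j}(a_i-a_j)$ and $\prod_{i<j}(v_j-v_i)$ after the same Chebyshev reduction used for \eqref{cauchy9}; matching proves \eqref{cauchy1}.

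The main obstacle is \eqref{cauchy1}: while the other three reduce to a single column operation or series manipulation, \eqref{cauchy1} requires combining the $\varphi_0$-to-Chebyshev factorization, a classical Lagrange--Cramer determinant evaluation, and a careful tracking of the Chebyshev normalization on the RHS (the last columns of the two $a$- and $v$-determinants are ``$1$'' rather than $c_0=u^0+u^0=2$, which is precisely the normalization that makes the reduction to standard Vandermondes $\prod_{i<j}(a_i-a_j)$ and $\prod_{i<j}(v_j-v_i)$ clean).
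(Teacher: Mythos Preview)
Your proof is correct and, for \eqref{cauchy3}, \eqref{cauchy9}, and \eqref{cauchy4}, matches the paper's proof essentially verbatim: the paper pulls out $(u_i-u_i^{-1})$ from each row and applies column operations for \eqref{cauchy3}, invokes the symplectic Weyl denominator product formula twice for \eqref{cauchy9}, and uses geometric-series expansion for \eqref{cauchy4}.

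The only real difference is in \eqref{cauchy1}. The paper dispatches it in one line by citing the classical Cauchy determinant identity
\[
\prod_{i,k}(x_i-y_k)\cdot\left|\frac{1}{x_i-y_k}\right|=\left|1,x,\ldots,x^{n-1}\right|\cdot\left|1,y,\ldots,y^{n-1}\right|
\]
and substituting $x_i=z_i+z_i^{-1}$, $y_k=u_k+u_k^{-1}$ (via the factorization $(1-z_iu_k)(1-z_iu_k^{-1})=z_i(x_i-y_k)$). Your Lagrange-basis argument --- rewriting the $(i,k)$-entry as $z_i^{n-1}q_k(v_i)$, factoring $[q_k(v_i)]=V\cdot P$, and evaluating $\det P$ by specializing $v\mapsto a_l$ --- is in fact one of the standard proofs of Cauchy's identity itself, so you have re-derived what the paper quotes. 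The paper's route is shorter; yours is more self-contained and makes the Chebyshev normalization (the ``$1$'' versus ``$2$'' in the last column) explicit, which the paper leaves implicit.
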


\begin{proof}
Recall Cauchy's formula (cf. \cite{Wey})
\begin{equation*}
\prod_{i,k}{(x_i-y_k)} \cdot
\left|\frac{1}{x_i-y_k}\right|=\left|1,x,\ldots,x^{n-1}\right|\cdot
\left|1,y,\ldots,y^{n-1}\right|,
\end{equation*}
where $x^j$ $(0\le j \le n-1)$ denotes the column vector
$(x^j_1,\ldots,x^j_n)^t$ and so on; similar notations apply below.
Putting $x_i=z_i-z_i^{-1}$ and $y_k=u_k-u_k^{-1}$ in the Cauchy's
formula gives us (\ref{cauchy1}). The identity (\ref{cauchy3}) is
obtained by taking out the common factors $(u_i-u_i^{-1})$ of the
determinant on its left-hand side, and applying elementary column
operations. The identity (\ref{cauchy9}) is proved by using twice
the Weyl denominator formula for $\mathfrak{sp}(2n)$:
\begin{align*}
&\left|z^{-1}-z,\ldots,z^{-n}-z^n\right| = (-1)^n
\left|z-z^{-1},\ldots,z^{n}-z^{-n}\right|\nonumber\\
&=(-1)^n\prod_{1\le i<j\le n}(z_j+z_j^{-1}-z_i-z_i^{-1})
\prod_{1\le i\le n}(z_i-z_i^{-1})\nonumber\\
&=(z_n^{-1} -z_n) \prod_{i=1}^{n-1}(z_i+z_i^{-1}-z_n-z_n^{-1})\left|z-z^{-1},\ldots,z^{n-1}-z^{-n+1}\right|\nonumber\\
&=(z_n^{-1} -z_n)\prod_{i=1}^{n-1}z_i^{-1}(1 -z_iz_n)(1
-z_iz_n^{-1})\left|z-z^{-1},\ldots,z^{n-1}-z^{-n+1}\right|.
\end{align*}
The (\ref{cauchy4}) follows by the geometric series expansion of its
right-hand side.
\end{proof}

\begin{theorem}\label{det=euler}
Let $\G=\spo(2n|3)$ and $\p$ be the parabolic subalgebra obtained by
removing the simple roots
$\delta_1-\delta_2,\ldots,\delta_{n-1}-\delta_n$ so that
$\lev=\gl(1)^{n-1}\oplus\spo(2|3)$. Let
$\la=\sum_{i=1}^{n-1}\la_i\delta_i$ be dominant integral.  Then the
Euler characters coincide with the Jacobi-Trudi characters, that is,
$E^\p(L^0(\la))=D(\la).$
\end{theorem}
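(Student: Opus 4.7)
My plan is to transform both sides into the same explicit ratio using the Cauchy-type identities of the preceding lemma. First, for the Euler character: since $\la$ has support in the abelian $\gl(1)^{n-1}$ and $\spo(2|3)\subset\lev$ acts trivially, the Levi module $L^0(\la)=\C_\la$ has character $e^\la$; moreover $\Delta^1_{\lev,+}=\{\delta_n,\delta_n\pm\epsilon_1\}$. Writing $u_i=e^{\delta_i}$, $y=e^{\epsilon_1}$, I decompose $W=W(C_n)\times W(B_1)$ and apply the $W(B_1)$-antisymmetrization first; this preserves the odd-Levi denominator (the last two factors swap under $y\mapsto y^{-1}$) and converts $y^\hf$ (from $e^{\la+\rho}$) into $y^\hf-y^{-\hf}$. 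Next, invoking (\ref{cauchy4}) with $u\mapsto y$, $z\mapsto -u_n^{-1}$, and using $(1+u_n^{-1})(1-u_n)=u_n^{-1}-u_n$, I rewrite the $y$-denominator as a series in $\xi_l(y):=y^{l+\hf}-y^{-l-\hf}$. The $W(C_n)=S_n\ltimes\{\pm1\}^n$-antisymmetrization applied term-by-term in $l$ then produces, by multilinearity of the determinant in its last column and geometric-series summation, a single $n\times n$ determinant $\det[M]$ whose first $n-1$ columns are Weyl-type entries $u_i^{\tilde\mu_j}-u_i^{-\tilde\mu_j}$ (with $\tilde\mu_j=\la_j+n-j-\hf$), and whose last column has the closed form $y^\hf u_i^\hf(1-u_i)(1-y)/[(u_i+y)(1+yu_i)]$. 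This yields $E^\p(L^0(\la))=(D_1/D_0)\det[M]$.

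Next, for $D(\la^\sharp)$: via Lemma \ref{lem:p=e} and the generating function $\sum_l p_l z^l=\varphi_1(z)/\varphi_0(z)$, I express $D(\la^\sharp)$---after padding $\la$ to length $n$ by trailing zeros, which does not change the determinant (the added rows form an upper-triangular block with $1$'s on the diagonal)---as a coefficient extraction from a determinant involving $\varphi_1(z_i)/\varphi_0(z_i)$ in auxiliary variables $z_1,\ldots,z_n$. Applying the Cauchy identity (\ref{cauchy1}) factors the kernel determinant $\det[1/((1-z_iu_k)(1-z_iu_k^{-1}))]$ into the product of an $\mathfrak{sp}(2n)$-Vandermonde in $(u_k+u_k^{-1})$---matching the corresponding factor of $D_0$---and a companion determinant in $(z_i+z_i^{-1})$. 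Identity (\ref{cauchy3}) then peels off $\prod_i(u_i-u_i^{-1})$ from the $u$-Vandermonde, while (\ref{cauchy9}) handles the last-row/column adjustment reflecting the $\spo(2|3)$ block of the Levi. The resulting explicit expression for $D(\la^\sharp)$ coincides with the one for $E^\p(L^0(\la))$ from the first step.

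The hardest part will be the careful bookkeeping of half-integer powers of $u_i$ inherited from $\rho$: these cancel only after combining with the factor $\prod_i(u_i^\hf+u_i^{-\hf})$ in $D_1$ coming from the non-isotropic odd roots $\pm\delta_i$ of $\spo(2n|3)$. A related technical point is verifying that the $y$-dependence produced by the $\xi_l(y)$-resummation combines with $D_1=\prod_i(u_i+u_i^{-1}+y+y^{-1})(u_i^\hf+u_i^{-\hf})$ to reproduce precisely the factor $\varphi_1(z)=(1+yz)(1+y^{-1}z)(1+z)$ underlying the $p_l$'s. The four Cauchy-type identities of the preceding lemma are designed precisely to make this bookkeeping tractable, and one can check the low-rank cases ($n=2$, $\la=0$ and $\la=\delta_1$) by hand to confirm the matching.
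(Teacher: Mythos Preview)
Your plan is essentially the paper's approach—Weyl group decomposition $W=W(C_n)\times W(B_1)$ followed by the four Cauchy-type identities of the preceding lemma—but organized as a meet-in-the-middle computation rather than the paper's one-directional transformation of the Euler side into the Jacobi-Trudi form. Two points of comparison are worth noting. First, the paper uses (\ref{cauchy4}) differently: after arriving at the determinant (\ref{cauchy5}) directly from the $\Z_2^n$- and $S_n$-antisymmetrizations, it applies (\ref{cauchy4}) with $u=u_k$, $z=z_i$ to rewrite the first $n-1$ columns $u_k^{\mu_j}-u_k^{-\mu_j}$ as coefficient extractions in auxiliary variables $z_1,\ldots,z_{n-1}$, and handles the last column by the substitution $z_n=-y$ (so that $\frac{1}{u_k+u_k^{-1}+y+y^{-1}}=\frac{y}{(1-u_kz_n)(1-u_k^{-1}z_n)}$); this lets (\ref{cauchy1}) apply uniformly to a full $n\times n$ Cauchy kernel, after which (\ref{cauchy3}), (\ref{cauchy6}), (\ref{cauchy9}) unwind everything to the $(n-1)$-variable Jacobi-Trudi coefficient extraction. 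Your route—expanding the odd-Levi denominator in $\xi_l(y)$ via (\ref{cauchy4}) with $u\mapsto y$, $z\mapsto -u_n^{-1}$, antisymmetrizing term-by-term, and resumming—lands at the same determinant (your last-column closed form is the paper's times the factor $y^{\hf}-y^{-\hf}$ not yet canceled from $D_0$), so this step is a detour. Second, Lemma~\ref{lem:p=e} is not used in the paper's argument; the Jacobi-Trudi side is identified directly from the $p$-determinant definition via (\ref{sympower}).
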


\begin{proof}
Denote by $D_0^{sp}$ the Weyl denominator for $\mathfrak{sp}(2n)$.
We have
\begin{align*}
 &E^\p  (L^0(\la)) \\
 & = \frac{D_1}{D_0}\sum_{w\in W}(-1)^{l(w)}w
 \left(
\frac{e^{\la+\rho}}{(1+e^{-\delta_n+\epsilon_1})(1+e^{-\delta_n-\epsilon_1})(1+e^{-\delta_n})}
\right)\\
&=\frac{D_1}{D_0^{sp}(e^{\epsilon_1/2}-e^{-\epsilon_1/2})}
\sum_{w\in W_{sp}}(-1)^{l(w)} w\left(
\frac{e^{{\sum_{i=1}^n}(\la_i+n-i-\hf)\delta_i}(e^{\epsilon_1/2}-e^{-\epsilon_1/2})}
{(1+e^{-\delta_n+\epsilon_1})(1+e^{-\delta_n-\epsilon_1})(1+e^{-\delta_n})}
\right)\\
&=\frac{D_1}{D_0^{sp}} \sum_{s\in S_n}(-1)^{l(s)}
s\left(\sum_{w\in\Z_2^n}w\left(
\frac{e^{{\sum_{i=1}^{n-1}}(\la_i+n-i-\hf)\delta_i-\hf\delta_n}}
{(1+e^{-\delta_n+\epsilon_1})(1+e^{-\delta_n-\epsilon_1})(1+e^{-\delta_n})}
\right)\right)\\
&=\frac{D_1}{D_0^{sp}} \sum_{s\in S_n}(-1)^{l(s)} s\left(
\prod_{i=1}^{n-1}(e^{(\la_i+n-i-\hf)\delta_i}-e^{-(\la_i+n-i-\hf)\delta_i})
\frac{e^\hf\delta_n-e^{-\hf\delta_n}}{e^\delta_n+e^{-\delta_n}+e^{\epsilon_1}+e^{-\epsilon_1}}
\right).
\end{align*}
Setting $\la_i+n-i-\hf=\mu_i$, for $i=1,\ldots,n-1$,
$u_i=e^{\delta_i}$, and $y=e^{\epsilon_1}$, we get
\begin{equation}\label{cauchy5}
E^\p(L^0(\la))=\frac{D_1}{D_0^{sp}}\left|
u^{\mu_1}-u^{-\mu_1},\ldots,u^{\mu_{n-1}}-u^{-\mu_{n-1}},\frac{u^\hf-u^{-\hf}}{u+u^{-1}+y+y^{-1}}
\right|.
\end{equation}

We write $D_1=D'_1 \cdot {\prod_k (u_k^\hf+u_k^{-\hf})}$, where
\begin{align}\label{cauchy6}
D_1' =y^{-n} \varphi_0(-y) 
= \prod_{k}(u_k^\hf y^\hf+u_k^{-\hf}y^{-\hf})(u_k^\hf
y^{-\hf}+u_k^{-\hf}y^\hf).
\end{align}
It is convenient to set $z_n=-y$ and denote by $[z_1^{a_1}\ldots
z_k^{a_k}]f$ the coefficient of $z_1^{a_1}\cdots z_k^{a_k}$ of $f$
below. Noting that
$$\frac{1}{u_k+u_k^{-1}+y+y^{-1}}=\frac{y}{(1-u_kz_n)(1-u_k^{-1}z_n)}$$
and using (\ref{cauchy4}), we rewrite (\ref{cauchy5}) as

\begin{align*}
E^\p(L^0(\la))&=[z_1^{\mu_1-\frac{3}{2}}\cdots
z^{\mu_{n-1}-\frac{3}{2}}_{n-1}]\frac{D_1}{D_0^{sp}} \left|
\frac{(1+z_i^{-1})(u_k^\hf-u_k^{-\hf})}{(1-u_kz_i)(1-u_k^{-1}z_i)};\frac{y(u_k^\hf-u_k^{-\hf})}
{(1-u_kz_n)(1-u_k^{-1}z_n)} \right|\\
&=[z_1^{\mu_1-\frac{3}{2}}\cdots
z^{\mu_{n-1}-\frac{3}{2}}_{n-1}]\frac{D'_1\prod
(u_k^\hf+u_k^{-\hf})}{D_0^{sp}}\times\\
&\qquad y \prod(u_k^\hf-u_k^{-\hf})\prod(1+z_i)^{-1}\left|
\frac{1}{(1-u_kz_i)(1-u_k^{-1}z_i)} \right|\\
&=[z_1^{\mu_1-\frac{3}{2}}\cdots
z^{\mu_{n-1}-\frac{3}{2}}_{n-1}]\frac{D'_1}{D_0^{sp}}
\prod(u_k-u_k^{-1})\frac{y\prod(z_i^{-1}-z_i)}{(y-y^{-1})\prod(1-z_i)}\times\\
&\qquad\frac{\prod_{i=1}^{n}z_i^{n-1}}{\prod_{i=1}^n\varphi_0(z_i)}\left|u^{n-1}+u^{-n+1},\ldots,1\right|
\left|1,z^{-1}+z,\ldots,z^{n-1}+z^{-n+1}\right| \\
& =[\prod_{i=1}^{n-1} z_i^{\la_i-i-1}]\frac{D'_1}{D_0^{sp}}
\frac{y^n(-1)^{n-1}}{\prod_{i=1}^{n-1}(1-z_i)(y-y^{-1})\varphi_0(-y)\prod_{i=1}^{n-1}\varphi_0(z_i)}\\
&\qquad\left|u^{n}-u^{-n},\ldots,u-u^{-1}\right|
\left|z^{-1}-z,\ldots,z^{-n}-z^{n}\right|,
\end{align*}
where (\ref{cauchy3}) was used in the last equation. By
(\ref{cauchy9}), (\ref{cauchy6}) and the Weyl denominator formula
$D_0^{sp}=\left|u^n-u^{-n},\ldots,u-u^{-1}\right|,$
 we rewrite the above expression for $E^\p(L^0(\la))$ as
\begin{align*}
 &= [\prod_{i=1}^{n-1} z_i^{\la_i-i-1}]
\frac{(-1)^{n-1}}{(y-y^{-1})\prod_{i=1}^{n-1}\varphi(z_i)(1-z_i)}
\left|z^{-1}-z,\ldots,z^{-n}-z^{n}\right| \\
&= [\prod_{i=1}^{n-1} z_i^{\la_i-i-1}]
\frac{\prod_{i=1}^{n-1}z_i^{-2}(1+z_i)(1+yz_i)(1+y^{-1}z_i)}{\prod_{i=1}^{n-1}\varphi_0(z_i)(z_i-z_i^{-1})}
\left|z-z^{-1},\ldots,z^{n-1}-z^{-n+1}\right|\\
&= [\prod_{i=1}^{n-1} z_i^{\la_i-i+1}]
\frac{\varphi_1(z_i)}{\varphi_0(z_i)}
\left|z-z^{-1},\ldots,z^{n-1}-z^{-n+1}\right|,
\end{align*}
which coincides with the Jacobi-Trudi character $D(\la)$ by
(\ref{sympower}).
\end{proof}

\begin{remark}
Let $\mathfrak{q}$ be the parabolic with
$\lev=\gl(1)^{n-1}\oplus\gl(1|1)$. Then one can show similarly that
$E^{\mathfrak q}(L^0(\la))=2E^\p(L^0(\la))$.
\end{remark}

\begin{proposition} Assume $n\ge 2$ and
let $\G=\spo(2n|3)$. Let $\p$ be the parabolic subalgebra obtained
by removing the simple roots
$\delta_1-\delta_2,\ldots,\delta_{n-1}-\delta_n$ and let
$\la=\sum_{i=1}^{n-1}\la_i$ be a dominant integral weight of
$\spo(2n|3)$ with $\la_{n-1} >0$.  Set $\tilde{\la}=\la+\delta_n$.
Then we have
\begin{equation*}
E^\p(L^0(\tilde{\la}))=K (\tilde{\la}) +E^\p(L^0(\la)).
\end{equation*}
\end{proposition}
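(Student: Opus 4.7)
The plan is to exploit linearity of $E^\p(-)$ in the $\lev$-character of its argument. Because $\la$ has zero $\delta_n$- and $\epsilon_1$-components, the irreducible $\lev$-module $L^0(\la)$ is one-dimensional with character $e^{\la}$, while $L^0(\tilde{\la})$ is $\C_\la$ tensored with the natural five-dimensional $\spo(2|3)$-module, whose character is $e^{\delta_n}+e^{-\delta_n}+e^{\epsilon_1}+e^{-\epsilon_1}+1$. Subtracting,
\begin{equation*}
{\rm ch}\,L^0(\tilde{\la})-{\rm ch}\,L^0(\la)=e^{\la}\bigl(e^{\delta_n}+e^{-\delta_n}+e^{\epsilon_1}+e^{-\epsilon_1}\bigr),
\end{equation*}
so the claim $E^\p(L^0(\tilde{\la}))=E^\p(L^0(\la))+K(\tilde{\la})$ reduces to
\begin{equation*}
D\sum_{w\in W}(-1)^{l(w)}w\!\left(\frac{e^{\la+\rho}\bigl(e^{\delta_n}+e^{-\delta_n}+e^{\epsilon_1}+e^{-\epsilon_1}\bigr)}{(1+e^{-\delta_n+\epsilon_1})(1+e^{-\delta_n-\epsilon_1})(1+e^{-\delta_n})}\right)=K(\tilde{\la}).
\end{equation*}

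The next step is to simplify the rational function on the left. Setting $A=e^{\delta_n}$, $B=e^{\epsilon_1}$ and multiplying numerator and denominator by $A^3$, the numerator becomes $A^2\bigl(A^2+A(B+B^{-1})+1\bigr)=A^2(A+B)(A+B^{-1})$ and the denominator becomes $(A+B)(A+B^{-1})(A+1)$; two factors cancel and the fraction collapses to
\begin{equation*}
\frac{A^2}{A+1}=\frac{e^{\delta_n}}{1+e^{-\delta_n}}=e^{\delta_n}-\frac{1}{1+e^{-\delta_n}}.
\end{equation*}
After multiplication by $e^{\la+\rho}$, the first summand on the right equals $e^{\tilde{\la}+\rho}$, whose $D$-weighted $W$-antisymmetrization is by definition exactly $K(\tilde{\la})$. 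It therefore remains to show that the residual piece
\begin{equation*}
\sum_{w\in W}(-1)^{l(w)}w\!\left(\frac{e^{\la+\rho}}{1+e^{-\delta_n}}\right)
\end{equation*}
vanishes.

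For this I would use the simple sign-flip reflection $s_n\in W$ sending $\delta_n\mapsto-\delta_n$, which has length one. Since $\la$ has no $\delta_n$-component and the $\delta_n$-coefficient of $\rho$ equals $-\hf$, one computes $s_n(\la+\rho)=\la+\rho+\delta_n$; clearing a factor $e^{-\delta_n}$ from both numerator and denominator then shows the summand $e^{\la+\rho}/(1+e^{-\delta_n})$ is $s_n$-invariant. Pairing each $w\in W$ with $ws_n$ thus forces the signed sum to vanish, and combining the three steps gives the proposition. The main obstacle is the algebraic identity in the middle step: the clean factorization $A^3(A+A^{-1}+B+B^{-1})=A^2(A+B)(A+B^{-1})$, which makes two of the three denominator factors drop out, is the nonobvious input; once it and the $s_n$-invariance are observed, the rest is formal.
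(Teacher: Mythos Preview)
Your argument is correct and neatly self-contained. The paper proceeds differently: rather than taking the difference $E^\p(L^0(\tilde{\la}))-E^\p(L^0(\la))$ at the outset, it pushes $E^\p(L^0(\tilde{\la}))$ all the way through the Weyl-group computation of Theorem~\ref{det=euler} to obtain a determinantal expression
\[
\frac{D_1}{D_0^{sp}}\left|
u^{\mu_1}-u^{-\mu_1},\ldots,u^{\mu_{n-1}}-u^{-\mu_{n-1}},\;(u^{\hf}-u^{-\hf})+\frac{u^{\hf}-u^{-\hf}}{u+u^{-1}+y+y^{-1}}
\right|,
\]
and then splits the last column, recognizing the two pieces as $K(\tilde{\la})$ and (by comparison with~(\ref{cauchy5})) $E^\p(L^0(\la))$. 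Your route bypasses the determinant machinery entirely: the factorization $A^3(A+A^{-1}+B+B^{-1})=A^2(A+B)(A+B^{-1})$ collapses the odd denominator directly, and the residual term is killed by the explicit $s_n$-pairing rather than by being absorbed into a previously computed determinant. The paper's version has the advantage of recycling the work already done for Theorem~\ref{det=euler}; yours has the advantage of being independent of that theorem and of making transparent exactly which Weyl-group cancellation is responsible for the identity.
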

\noindent (Note that $\tilde{\la}$ and $\la$ have the same central
character.)

\begin{proof}
We have
\begin{align*}
E^\p& (L^0(\tilde{\la}))  \\
 &= \frac{D_1}{D_0}\sum_{w\in W}(-1)^{l(w)}
w\left(
\frac{e^{\la+\rho}(e^{\delta_n}+e^{-\delta_n}+1+e^{\epsilon_1}+e^{-\epsilon_1})}
{(1+e^{-\delta_n+\epsilon_1})(1+e^{-\delta_n-\epsilon_1})(1+e^{-\delta_n})}\right)
 \\
 & = \frac{D_1}{D_0^{sp}}\left|
u^{\mu_1}-u^{-\mu_1},\ldots,u^{\mu_{n-1}}-u^{-\mu_{n-1}},(u^\hf-u^{-\hf})+\frac{u^\hf-u^{-\hf}}{u+u^{-1}+y+y^{-1}}
\right|,
\end{align*}
where $\tilde{\la}_i+n-i-\hf=\mu_i$, for $i=1,\ldots,n$,
$u_i=e^{\delta_i}$, and $y=e^{\epsilon_1}$. The last identity
above was derived in a way similar to (\ref{cauchy5}). Thus by
comparing with (\ref{cauchy5}) we have
\begin{align*}
E^\p(L^0(\tilde{\la})) =E^\p(L^0(\la)) + \left|
u^{\mu_1}-u^{-\mu_1},\ldots,u^{\mu_{n-1}}-u^{-\mu_{n-1}},(u^\hf-u^{-\hf})
\right|.
\end{align*}
Now similarly to the derivation of (\ref{cauchy5}), we can again
show that
\begin{align*}
K(\tilde{\la})&=\frac{D_1}{D_0}\sum_{w\in
W}(-1)^{l(w)}w\left(e^{\tilde{\la}+\rho}\right)\\
&=\left|
u^{\mu_1}-u^{-\mu_1},\ldots,u^{\mu_{n-1}}-u^{-\mu_{n-1}},(u^\hf-u^{-\hf})
\right|,
\end{align*}
which completes the proof.
\end{proof}

\section{Tensors of $\G$-modules}
\label{sec:tensor}

\subsection{Exterior tensors of the natural module}

Let $\xi_j,\bar{\xi}_j$, $j=1,\ldots,n$, be the
standard basis for $\C^{2n|0}$, and let $x_i,\bar{x}_i,x_0$,
$i=1,\ldots,m$, be the standard basis for $\C^{0|2m+1}$, so that
$\Lambda(\C^{2n|2m+1})\cong\C[x_i,\bar{x}_i,x_0]\otimes\Lambda(\xi_j,\bar{\xi}_j)$.
We consider the Laplacian for $\spo(2n|2m+1)$
$$\Delta=\sum_{j=1}^n\frac{\partial}{\partial\xi_j}\frac{\partial}{\partial\bar{\xi}_j}-
\sum_{i=1}^m\frac{\partial}{\partial
x_i}\frac{\partial}{\partial\bar{x}_i}-\frac{1}{2}
\frac{\partial^2}{\partial x^2_0}.
$$
Dropping the last term in $x_0$ gives us the Laplacian for
$\spo(2n|2m)$.

\begin{proposition}\label{exterior:tensor}
\begin{itemize}
\item[(i)]
Let $\G=\spo(2|2m+1)$ with $m\ge 1$, and let $k \ge 1$. Then the
kernel of
$\Delta:\Lambda^k(\C^{2|2m+1})\rightarrow\Lambda^{k-2}(\C^{2|2m+1})$
as a $\G$-module is irreducible and isomorphic to
$L_{\delta_1+(k-1)\epsilon_1}$.

\item[(ii)]
Let $\G=\spo(2n|3)$, with $n\ge 1$, and let $1\le k\le n-1$. Then
the kernel of
$\Delta:\Lambda^k(\C^{2n|3})\rightarrow\Lambda^{k-2}(\C^{2n|3})$ as
a $\G$-module is irreducible.

\item[(iii)]
Let $n\ge 2$. The $\spo(2n|2n)$-module ${\rm
ker}\left[\Delta:\Lambda^2(\C^{2n|2n})\rightarrow\C\right]$ is not
irreducible.  It contains a unique submodule which is the trivial
module.

\item[(iv)] Let $n\ge 2$. The $\spo(2n|2n)$-module ${\rm
ker}\left[\Delta:\Lambda^k(\C^{2n|2n})\rightarrow\Lambda^{k-2}(\C^{2n|2n})\right]$
is irreducible, for $k=3,4$.
\end{itemize}
\end{proposition}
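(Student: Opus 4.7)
The plan follows a common template. The Laplacian $\Delta$ and multiplication by the $\G$-invariant element $\omega\in\Lambda^2 V$ (namely $\omega=\sum_j\xi_j\bar\xi_j+\sum_i x_i\bar x_i$, plus an $x_0^2$ term in the odd case) both commute with the $\G$-action on $\Lambda^\bullet V$. A direct computation shows that $\Delta\omega$ is a nonzero scalar except precisely when $\G=\spo(2n|2n)$, where $\Delta\omega=0$. This vanishing is the source of the trivial submodule in (iii).

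For (i), consider $v_k=\xi_1\cdot x_1^{k-1}\in\Lambda^k(\C^{2|2m+1})$, of weight $\delta_1+(k-1)\epsilon_1$. It is harmonic since no barred generators are present for contraction, and every positive root vector annihilates it, either by producing $\xi_1\wedge\xi_1=0$ or because the required barred generator is absent from $v_k$. Hence $\ker\Delta$ contains an irreducible submodule isomorphic to $L(\delta_1+(k-1)\epsilon_1)$. Since $\Delta\omega\ne 0$, the pair $(\Delta,\omega)$ generates a semisimple $\mathfrak{sl}_2$-action on $\Lambda^\bullet V$ commuting with $\G$, which forces $\dim\ker\Delta|_{\Lambda^k V}=\dim\Lambda^k V-\dim\Lambda^{k-2}V$. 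To upgrade this to equality $\ker\Delta=L(\delta_1+(k-1)\epsilon_1)$, I rule out any extra composition factors using Proposition \ref{central:char}: any such factor would have a highest weight linked to $\delta_1+(k-1)\epsilon_1$ by a chain of isotropic odd reflections, and the short list of candidates is checked against the weight multiplicities of $\Lambda^k V$ to exclude extras. The case $k=1$ is the natural module itself. Part (ii) follows by the same recipe with the candidate highest-weight vector $\xi_1\wedge\cdots\wedge\xi_k\in\Lambda^k(\C^{2n|3})$ of weight $\delta_1+\cdots+\delta_k$; again $\Delta\omega\ne 0$ supplies the dimension count, and for $1\le k\le n-1$ the central-character enumeration eliminates additional composition factors.

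For (iii)--(iv), $\G=\spo(2n|2n)$ and $\Delta\omega=0$, so $\omega$ itself is harmonic. This immediately yields the trivial submodule $\C\omega\subset\ker\Delta|_{\Lambda^2 V}$ required in (iii); its uniqueness follows because $\omega$ is the unique up-to-scalar $\G$-invariant in $\Lambda^2 V$, as an elementary weight-space count shows. For (iv), take $\xi_1\wedge\xi_2\wedge\xi_3$ and $\xi_1\wedge\cdots\wedge\xi_4$ as candidate highest-weight vectors for $k=3$ and $k=4$; each is harmonic. Since $\omega$ is harmonic, we have the containment $\omega\cdot\ker\Delta|_{\Lambda^{k-2}V}\subseteq\ker\Delta|_{\Lambda^k V}$, which could in principle contribute extra composition factors. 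A central-character analysis via Proposition \ref{central:char}, combined with explicit computation of $\omega$-multiplication in these low degrees, shows that the resulting image already lies inside the irreducible submodule generated by the candidate vector, so no extra composition factors appear and $\ker\Delta|_{\Lambda^k V}$ is irreducible for $k=3,4$.

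The main obstacle is this last step in (iii)--(iv). In the non-degenerate cases (i) and (ii), $\Delta\omega\ne 0$ produces a semisimple $\mathfrak{sl}_2$-action that cleanly separates $\ker\Delta$ from $\omega\cdot\Lambda^{k-2}V$, so standard central-character counting applies. In the degenerate case $\Delta\omega=0$ this separation breaks down: $\omega$-multiplication can create additional harmonic classes and the $\mathfrak{sl}_2$-action is no longer semisimple. Ruling out extra composition factors then requires a careful enumeration of which central characters actually occur among the weights of $\Lambda^k V$, and the restriction to $k=3,4$ in (iv) marks exactly the range in which this enumeration remains simple enough to close.
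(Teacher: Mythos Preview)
Your Lefschetz-style framework with the pair $(\Delta,\omega)$ is a reasonable organizing device, and your observation that $\Delta\omega=0$ precisely for $\spo(2n|2n)$ correctly isolates where the trivial submodule in (iii) comes from. But the argument has a genuine gap in parts (i) and (ii), and is too vague in (iii)--(iv) to close.

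The central problem is your assertion in (i)--(ii) that any extra composition factor of $\ker\Delta$ ``would have a highest weight linked to $\delta_1+(k-1)\epsilon_1$ by a chain of isotropic odd reflections.'' This amounts to claiming that $\ker\Delta$ has a single central character, and nothing you have said establishes that; the ambient module $\Lambda^k V$ certainly does not. In the paper this is exactly the nontrivial point: for (ii) one first identifies $\ker\Delta$ with the Jacobi-Trudi character $D(\lambda)$ via Lemma~\ref{lem:p=e}, then with an Euler character $E^{\mathfrak p}(L^0(\lambda))$ via Theorem~\ref{det=euler}, and only then invokes Remark~\ref{rem:euler} to conclude that all composition factors lie in one block. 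Without that chain of identifications your central-character enumeration has no starting point. For (i) the paper avoids the issue entirely by a more direct route: one computes that $\ker\Delta$ decomposes over $\G_{\bar 0}=\mathfrak{sp}(2)\oplus\mathfrak{so}(2m+1)$ into exactly three irreducibles, with highest weights $k\epsilon_1$, $\delta_1+(k-1)\epsilon_1$, $(k-2)\epsilon_1$, of which only the middle one is dominant for $\G$ by Proposition~\ref{finite-dim}. That settles irreducibility immediately, with no appeal to central characters at all.

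Your treatment of (iii)--(iv) is also incomplete. In (iii) you show $\C\omega$ is a trivial submodule and that it is the only \emph{trivial} submodule, but the statement asserts it is the only proper nonzero submodule; you have not shown the quotient is irreducible. The paper does this by again listing the $\G_{\bar 0}$-highest weight vectors in $\ker\Delta$ (there are four) and observing only two of their weights are $\G$-dominant. In (iv), the paper's argument is concrete rather than $\omega$-based: one writes down the three candidate $\G_{\bar 0}$-highest weight vectors whose weights are $\G$-dominant, notes their central characters are not all equal (so reducibility would force a direct-sum splitting and hence at least two genuine $\G$-singular vectors), and then checks by hand that only $\xi_1\xi_2\xi_3\xi_4$ is annihilated by the odd simple root vector $e_0$. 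Your $\omega$-multiplication sketch does not supply this check, and it is precisely this explicit $e_0$-computation that makes the case $k=4$ go through.
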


\begin{proof}
(i). The case of $k=1$ is clear, so we may assume that $k\ge 2$. We
decompose $\Lambda^k(\C^{2|2m+1})$ as an
${\mathfrak{sp}}(2)\oplus{\mathfrak{so}(2m+1)}$-module. By a direct
computation, as a
${\mathfrak{sp}}(2)\oplus{\mathfrak{so}(2m+1)}$-module, the kernel
of $\Delta$ in $\Lambda^k(\C^{2|2m+1})$ is a direct sum of three
irreducibles with respective highest weight vectors $x_1^k$,
$x_1^{k-1}\xi_1$ and $v :=
x_1^{k-2}\left((k+m-\frac{3}{2})\xi_1\bar{\xi}_1-\sum_{i=1}^m
x_i\bar{x}_i-\hf x_0^2\right)$, which have distinct weights. Among
these three vectors, only the weight of $x_1^{k-1}\xi_1$ can be a
finite-dimensional $\spo(2|2m+1)$-highest weight.
From this it follows that the kernel of $\Delta$ on
$\Lambda^k(\C^{2|2m+1})$ is an irreducible $\spo(2|2m+1)$-module.

(ii). Under our assumption, the kernel of $\Delta$ in
$\Lambda^k(\C^{2n|3})$ is isomorphic to $D(\la)$ with $\la =\delta_1
+\ldots + \delta_k$ by Lemma~\ref{lem:p=e}. Also by
Theorem~\ref{det=euler} and Remark~\ref{rem:euler} we see that all
composition factors of $D(\la)$ lie in the same block. Suppose on
the contrary that $D(\la)$ were not irreducible. As an
$\mathfrak{sp}(2n)\oplus\mathfrak{so}(3)$-module, ${\rm
ker}\left[\Delta:\Lambda^k(\C^{2n|3})\rightarrow\Lambda^{k-2}(\C^{2n|3})\right]$
decomposes into a direct sum of the irreducibles of highest weights
of the form $\sum_{i=1}^{k-j}\delta_i+s\epsilon_1$, among which only
weights of the form $\sum_{i=1}^{k-j}\delta_i$ can possibly be
finite-dimensional $\spo(2n|3)$-highest weights by
Proposition~\ref{finite-dim}. Hence, $D(\la)$ must have an
$\spo(2n|3)$-singular vector with weight $\sum_{i=1}^{k-j}\delta_i$,
for $j>0$.   However, a calculation using Proposition
\ref{central:char} shows that $\sum_{i=1}^{k-j}\delta_i$ have
different central characters for distinct $j$, which is a
contradiction.

(iii). One shows that as an
$\mathfrak{sp}(2n)\oplus\mathfrak{so}(2n)$-module, ${\rm ker}\Delta$
has $x_1^2$, $x_1\xi_1$, $\xi_1\xi_2$, and $\phi :=\sum_{i=1}^n
\xi_i\bar{\xi}_i-\sum_{i=1}^n x_i\bar{x}_i$ as  a complete set of
highest weight vectors. Among the weights of these vectors only
$\delta_1+\delta_2$ and $0$ are finite-dimensional
$\spo(2n|2n)$-weights, which implies that ${\rm ker}\Delta$ has at
most two composition factors. However, by a direct computation,
$\phi$ is $\spo(2n|2n)$-invariant (i.e. $\C\phi$ is a trivial
module), and thus ${\rm ker}\Delta/\C\phi$ is irreducible.

(iv). First we write down explict formulas for
$$e_0=\xi_n\frac{\partial}{\partial
x_1}-\bar{x}_1\frac{\partial}{\partial \bar{\xi}_n},
 \quad
f_0=x_1\frac{\partial}{\partial
\xi_n}+\bar{\xi}_n\frac{\partial}{\partial \bar{x}_1},
$$
that are the odd simple positive and negative root vectors,
respectively.

We will give a proof only in the most involved case of $k=4$ and
$n\ge 4$. We decompose ${\rm ker}\Delta$ as an
$\mathfrak{sp}(2n)\oplus{\mathfrak so}(2n)$-modules and search among
those highest weights the ones that are finite-dimensional
$\spo(2n|2n)$-highest weights. The only possibilities are
$\sum_{i=1}^4\delta_i$, $\delta_1+\delta_2$, and $0$, each appearing
with multiplicity one. Now it is not difficult to write down the
corresponding $\mathfrak{sp}(2n)\oplus{\mathfrak so}(2n)$-highest
weight vectors of these weights, namely explicitly they are
\begin{align*}
\xi_1\xi_2\xi_3\xi_4,\quad
\xi_1\xi_2(\phi_1-\frac{n}{n-2}\phi_0),\quad
(\phi_1-\phi_0)^2+\frac{1}{n-1}\phi_0^2.
\end{align*}
Next we observe that the central character of $\sum_{i=1}^4\delta_i$
is different from that of $\delta_1+\delta_2$ (which is the same as
that of $0$). Thus if ${\rm ker}\Delta$ were not irreducible, then
among those three vectors at least two would be killed by $e_0$.
Applying $e_0$ to these vectors we find that only
$\xi_1\xi_2\xi_3\xi_4$ is killed by $e_0$. This concludes the proof.
\end{proof}

\begin{remark}
Using a similar argument as the one in the proof of Proposition
\ref{exterior:tensor} (iv) one can show that ${\rm
ker}\left[\Delta:\Lambda^n(\C^{2n|3})\rightarrow\Lambda^{n-2}(\C^{2n|3})\right]$
is irreducible over $\spo(2n|3)$. Also Proposition
\ref{exterior:tensor} (iii) shows that the Jacobi-Trudi character is
in general not irreducible.
\end{remark}

\subsection{A tensor product decomposition}

\begin{proposition} \label{decomp}
Let $\G=\spo(2|2m+1)$, and let $k \ge 2$. Then, the following
$\G$-module decompositions hold:
\begin{eqnarray*}
L_{\delta_1+(k-1)\epsilon_1}\otimes\C^{2|2m+1}
 &\cong&
L_{2\delta_1+(k-1)\epsilon_1} \oplus L_{\delta_1+k\epsilon_1} \oplus
L_{\delta_1+(k-2)\epsilon_1}
 \\
 \C^{2|2m+1} \otimes\C^{2|2m+1}
 &\cong& L_{2\delta_1} \oplus L_{\delta_1+\epsilon_1} \oplus \C.
\end{eqnarray*}
\end{proposition}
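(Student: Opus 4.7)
The plan is to handle the two decompositions separately, leveraging the identification $L_{\delta_1+(k-1)\epsilon_1} \cong \ker\Delta|_{\Lambda^k V}$ from Proposition \ref{exterior:tensor}(i).

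For $V \otimes V$, first split $V \otimes V \cong S^2 V \oplus \Lambda^2 V$ via the super-swap involution. Proposition \ref{exterior:tensor}(i) with $k=2$ identifies $\ker\Delta|_{\Lambda^2 V} \cong L_{\delta_1+\epsilon_1}$, and a direct computation shows that the $\G$-invariant vector $\phi \in \Lambda^2 V$ dual to the invariant bilinear form on $V$ satisfies $\Delta(\phi) = \hf - m \ne 0$ for $m \ge 1$, so it spans a $\G$-equivariant complement, giving $\Lambda^2 V \cong L_{\delta_1+\epsilon_1} \oplus \C$. For $S^2 V$, the vector $\xi_1^2$ is a $\G$-highest-weight vector of weight $2\delta_1$. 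Enumerating the $\mathfrak{sp}(2)\oplus\mathfrak{so}(2m+1)$-highest-weight vectors in $S^2 V$ (namely $\xi_1^2$, $\xi_1 x_1$, and a vector in $\Lambda^2 V_1$ of $\mathfrak{so}$-weight $\epsilon_1+\epsilon_2$) and checking directly that only $\xi_1^2$ is also annihilated by the odd simple root vector $e_0 = \xi_1\partial_{x_1}-\bar{x}_1\partial_{\bar\xi_1}$ shows that $S^2 V$ has a unique $\G$-singular line; hence $S^2 V \cong L_{2\delta_1}$.

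For $W \otimes V$ with $W := L_{\delta_1+(k-1)\epsilon_1}$, the highest-weight vector of $W$ is $v_0 = x_1^{k-1}\xi_1$. I would produce three explicit $\G$-singular vectors in $W \otimes V$: (a) $v_0 \otimes \xi_1$ of weight $2\delta_1+(k-1)\epsilon_1$, singular because $\xi_1$ is the $\G$-highest weight of $V$; (b) $v_0 \otimes x_1$ of weight $\delta_1+k\epsilon_1$, singular by the analogous argument with $x_1$; and (c) a vector of weight $\delta_1+(k-2)\epsilon_1$ of the form $v_0 \otimes \bar{x}_1 + (\text{correction})$, where the correction involves terms from other weight spaces of $W \otimes V$ chosen to cancel the images of the positive simple root vectors acting on $v_0 \otimes \bar{x}_1$. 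Enumerating the candidate new dominant highest weights $\delta_1+(k-1)\epsilon_1+\mu$ with $\mu$ a weight of $V$ and imposing Proposition \ref{finite-dim}(ii) yields a short list of possibilities; a central-character analysis along the lines used in the proof of Proposition \ref{exterior:tensor}(ii) then narrows these to the three listed. Finally, a dimension (or character) count comparing $\dim W \cdot \dim V$ with the sum of dimensions of the three candidate simple modules forces the direct-sum decomposition claimed.

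The main obstacle I anticipate is the explicit construction of the correction term in (c), which requires careful tracking of how the $\mathfrak{so}(2m+1)$-simple root vectors and the odd simple root vector $e_0$ act on weight vectors of $W$ (note that $v_0 \otimes \bar{x}_1$ itself is annihilated by $e_0$ but not by $E_{\epsilon_1-\epsilon_2}$, so the correction must involve $w'_j \otimes \bar{x}_j$ terms determined by a linear recursion), together with a clean dimension identity. Since the weights involved are atypical, Kac's character formula does not directly apply; the dimension check may instead proceed by induction on $k$, using the known dimensions $\dim\ker\Delta|_{\Lambda^{k\pm1}V}$ from Proposition \ref{exterior:tensor}(i), or via the Jacobi-Trudi formalism introduced in Section \ref{sec:Euler}.
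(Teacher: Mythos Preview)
Your plan for $V\otimes V$ via the super-swap splitting is workable and somewhat cleaner than the paper's uniform method, though note that ``unique $\G$-singular line'' does not by itself force $S^2V$ to be irreducible; you still need the central-character separation of $2\delta_1$ from $\delta_1+\epsilon_1$ to rule out a non-split extension.

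For $W\otimes V$ there is a concrete error and a structural gap. The error is claim (b): $v_0\otimes x_1$ is \emph{not} $\G$-singular, because $e_0$ (the root vector for $\delta_1-\epsilon_1$) sends $x_1\mapsto\xi_1$ in $V$, so $e_0(v_0\otimes x_1)=\pm v_0\otimes\xi_1\neq 0$. The genuine singular vector of weight $\delta_1+k\epsilon_1$ is a specific combination of $x_1^{k-1}\xi_1\otimes x_1$ and $x_1^k\otimes\xi_1$ (recall $x_1^k\in W$); so (b), like (c), requires a correction term.

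The gap is your endgame. Your dimension count needs $\dim L_{2\delta_1+(k-1)\epsilon_1}$, but for $m\ge 3$ this weight is atypical and no formula is at hand; induction on $k$ only produces $\dim L_{2\delta_1+(k-2)\epsilon_1}$, which does not help. The paper avoids both the explicit construction of singular vectors and any dimension count. It writes $W\otimes V$ as a direct sum of thirteen irreducible $\G_{\bar 0}$-modules with explicit highest weight vectors, observes that exactly five of these have $\G$-dominant weight (namely $2\delta_1+(k-1)\epsilon_1$ once and $\delta_1+k\epsilon_1$, $\delta_1+(k-2)\epsilon_1$ twice each), and checks that the three distinct weights have pairwise distinct central characters. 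This alone forces complete reducibility, since each central-character block then has only one possible highest weight. Multiplicities follow by verifying, for each doubly-occurring weight, that $e_0$ applied to the two $\G_{\bar 0}$-highest weight vectors gives nonzero \emph{proportional} results, so exactly one linear combination is $\G$-singular. This sidesteps both the construction of (c) you flag as an obstacle and the atypical-dimension bookkeeping.
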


\begin{proof}
We will first consider the tensor product
$L_{\delta_1+(k-1)\epsilon_1}\otimes\C^{2|2m+1}$. It follows from
the proof of Proposition~\ref{exterior:tensor}~(i) that, as a
$\G_{\bar{0}} =\mathfrak{sp}(2)\oplus \mathfrak{so}(2m+1)$-module,
$L(\delta_1+(k-1)\epsilon_1)$ (which is identified with the kernel
of $\Delta$ on $\Lambda^k(\C^{2|2m+1})$) is a direct sum of three
irreducible modules with highest weight vectors $v$,
$x_1^k$ and $x_1^{k-1}\xi_1$.  One then check by an elementary but
tedious calculation that $L(\delta_1+(k-1)\epsilon_1)\otimes
\C^{2|2m+1}$ as a $\G_{\bar{0}}$-module is a direct sum of $13$
irreducible modules with respective highest weight vectors given
as follows:
\begin{align*}
&x_1^k\otimes x_1,\quad x_1^{k-1}\left(x_1\otimes x_2-x_2\otimes
x_1\right),
 \\
&x_1^{k-1}\left(x_0\otimes x_0+\sum_{i=1}^m(x_i\otimes
\bar{x}_i+\bar{x}_i\otimes
x_i)\right)-\frac{k-1}{k+m-\frac{3}{2}}x_1^{k-2}(\hf
x_0^2+\sum_{i=1}^m x_i\bar{x}_i)\otimes x_1,
 \\
&x_1^k\otimes\xi,\quad x_1^{k-1}\xi\otimes x_1,\quad
x_1^{k-2}\xi\left(x_1\otimes x_2-x_2\otimes x_2\right),
 \\
&\gamma_0 :=\xi x_1^{k-2}\left(x_0\otimes x_0 +
\sum_{i=1}^m(x_i\otimes\bar{x}_i+\bar{x}_i\otimes x_i) \right) -
 \\
&\qquad\qquad\qquad\qquad\qquad\qquad\frac{k-2}{k+m-\frac{5}{2}}\xi
x_1^{k-3}\left( \hf
x_0^2+\sum_{i=1}^m x_i\bar{x}_i \right)\otimes x_1,
 \\
&x_1^{k-1}\xi\otimes\xi,\quad
x_1^{k-1}(\xi\otimes\bar{\xi}-\bar{\xi}\otimes\xi),\quad \psi_0
x_1^{k-2}\otimes x_1,\quad \psi_0 x_1^{k-3}(x_1\otimes
x_2-x_2\otimes x_1),
 \\
&\psi_0\left(x_1^{k-3}(x_0\otimes x_0 + \sum_{i=1}^m
x_i\otimes\bar{x}_i+\bar{x}_i\otimes x_i)  -
\frac{k-3}{k+m-\frac{7}{2}} x_1^{k-4}( \hf
x_0^2+\sum_{i=1}^m x_i\bar{x}_i)\otimes x_1\right),
 \\
&\psi_0x_1^{k-2}\otimes\xi,
\end{align*}
where
$\psi_0=(k+n-\frac{3}{2})\xi_1\bar{\xi}_1-\sum_{i=1}^mx_i\bar{x}_i-\hf
x_0^2$.

We  observe that among the weights of these vectors only five of
them can possibly be finite-dimensional $\spo(2|2m+1)$-highest
weights, namely $2\delta_1+(k-1)\epsilon_1$, $\delta_1+k\epsilon_1$,
and $\delta_1+(k-2)\epsilon_1$, where the first appears with
multiplicity one and the latter two each appears with multiplicity
two.  Note that these three weights give rise to distinct central
characters and hence the tensor product must be complete reducible,
with each irreducible component generated by a singular vector. Now
both $e_0. \gamma_0$ and $e_0. (x_1^{k-2}\psi_0\otimes\xi)$ are
nonzero and proportional to each other. Thus there is exactly one
$\spo(2|2m+1)$-highest weight vector of highest weight
$\delta_1+(k-2)\epsilon_1$. Furthermore, both $e_0. x_1^k\otimes\xi$
and $e_0. x_1^{k-1}\xi\otimes x_1$ are nonzero and proportional to
each other,
and hence there is exactly one $\spo(2|2m+1)$-highest weight vector
of highest weight $\delta_1+k\epsilon_1$. Clearly
$L(2\delta_1+(k-1)\epsilon_1)$ appears in the tensor product
decomposition with multiplicity one as $2\delta_1+(k-1)\epsilon_1$
is the unique highest weight of multiplicity one. This proves the
first identity.


The decomposition for $\C^{2|2m+1} \otimes\C^{2|2m+1}$ can be
proved in the same way.  Indeed here the situation is simpler,
since as an $\G_{\bar{0}}$-module $\C^{2|2m+1} \otimes\C^{2|2m+1}$
is a direct sum of only eight irreducibles. We skip the details.
\end{proof}

\begin{remark}
Let $\G=\spo(4|5)$.  We can show similarly as for
Proposition~\ref{decomp} that
\begin{equation*}
[L_{\delta_1+\delta_2}\otimes\C^{4|5}] =
\left[L_{2\delta_1+\delta_2} + L_{\delta_1}\right]\oplus
[L_{\delta_1+\delta_2+\epsilon_1}] \oplus [L_{\delta_1}],
\end{equation*}
where $\left[L_{2\delta_1+\delta_2} + L_{\delta_1}\right]$ denotes a
non-trivial extension of modules.
\end{remark}

\section{Examples and a conjecture}
\label{sec:examples}

\subsection{Examples of $\spo(2|3)$}
\label{sec:ex}

Throughout this Section \ref{sec:ex}, we let $\G =\spo(2|3)$. For
$\la=a\delta_1+b\epsilon_1$ write $L(a|b) =L(\la)$, $K(a|b)
=K(\la)$, and $E^\p(a|b)=E^\p(L^0(\la))$. The complete list of
atypical weights are: $\la=a\delta_1+b\epsilon_1$, where $(a|b)
=(\ell|\ell-1)$ for $\ell \ge 1$, or $(0|0)$. If $\la$ is typical,
then the character of $L(\la)$ is equal to $K(\la)$. On the other
hand, the character of $L(\ell|\ell-1)$ was computed in \cite{Ger}
(also see \cite{Gru}). In particular, ${\rm dim}L(\ell|\ell-1)
=2(4\ell^2-1)$ for $\ell \ge 2$, and ${\rm dim}L(1|0) =5.$ All the
examples in this section are computed from the definitions and the
formulas of these irreducible characters. We skip the details.

\subsubsection{``Composition factors" of the Euler
characters}

By abuse of notation, we will regard the Euler characters and the
Kac virtual characters as elements in the Grothendieck group of
the finite-dimensional $\G$-modules. We shall denote by $[L(\la)]$
the element in the Grothendieck group corresponding to the module
$L(\la)$.

\begin{example} \label{ex:euler}
Consider the maximal parabolic subalgebra $\p$ of $\spo(2|3)$
obtained by removing the simple root $\epsilon_1$ so that
$\lev=\gl(1|1)$. If $\la$ is typical, then $E^\p(\la) =K(\la)
=[L(\la)]$. For atypical weights, the following identities of
characters hold:
\begin{align*}
E^\p(\ell+1|\ell) &=K(\ell+1|\ell)=[L(\ell+1|\ell)] +[L(\ell|\ell-1)],\quad \ell \ge 2,\\
E^\p(2|1)&=K(2|1)= [L(2|1)] +[L(1|0)] +[L(0|0)],\\
E^\p(1|0)&=K(1|0)= [L(1|0)] -[L(0|0)],\\
E^\p(0|0)&=2 [L(0|0)].
\end{align*}
\end{example}

\begin{example}
Consider $\spo(2|3)$ with $\p$ obtained by removing the simple
root $\delta_1-\epsilon_1$ so that $\lev\cong
\gl(1)\oplus\mathfrak{so}(3)$. For atypical weights, the following
identities of characters hold:
\begin{align*}
E^\p(\ell+1|\ell) &=K(\ell+1|\ell) = [L(\ell+1|\ell)] +[L(\ell|\ell-1)],\quad \ell\ge 2,\\
E^\p(2|1)&=K(2|1)= [L(2|1)] +[L(1|0)] +[L(0|0)],\\
E^\p(1|0)&=K(1|0)=[L(1|0)] -[L(0|0)],\\
E^\p(0|0)&=K(0|0)=[L(0|0)] -[L(1|0)].
\end{align*}
These identities also follow from \cite[Lemma~2.2.1]{Ger} and our
Remark~2.4.
\end{example}

\subsubsection{Some virtual dimension formulas}

Since each virtual character $D(\la)$ or $K(\la)$ can be written
as the difference of two honest characters,  there is a
well-defined notion of {\em virtual dimension}, $\vdim$, of these
virtual characters as the difference of the degrees of the two
honest characters.

\begin{example}
The following virtual dimension formulas hold:
\begin{enumerate}
\item ${\vdim} K(\la)=
2^{|\Delta_1^+|}\prod_{\alpha\in\Delta_0^+}\frac{(\alpha,\la+\rho)}
{(\alpha,\la+\rho_0)}$.

\item ${\vdim}K(1|0)=4$.

\item ${\vdim}K(0|0)=-4$.

\item ${\vdim} K(\ell|\ell-1) =4(2\ell-1)^2= {\rm
dim}L(\ell|\ell-1) +{\rm dim}L(\ell-1|\ell-2)$, $\ell\ge 3$.

\item ${\vdim}D(\ell|\ell-1)={\vdim} K(\ell|\ell-1) -
(-1)^\ell$, $\ell\ge 3$.

\item ${\vdim}D(2|1) =35={\rm dim}L(2|1) +{\rm dim}L(1|0)$.

\item ${\vdim}D(1|0) ={\rm dim}L(1|0)$; actually, $D(1|0) =L
(1|0)$.

\item ${\vdim}D(1|k-1) ={\rm dim}L(1|k-1)$, for $k \ge 1$.
\end{enumerate}
\end{example}

\subsubsection{Tensor products of the simples with the natural
module}

Below we give explicit formulas for $L(a|b)\otimes L(1|0)$, where
we recall $L(1|0) =\C^{2|3}$, the natural $\spo(2|3)$-module.

\begin{example}
\begin{enumerate}
\item {$\la =(a|b)$ atypical.}
\begin{align*}
&[L(1|0) \otimes L(1|0)] =[L(2|0)] +[L(1|1)] +[L(0|0)],\\
&[L(l|l-1) \otimes L(1|0)] =[L(l+1|l-1)] +[L(l|l)]
+[L(l|l-1)],\quad l\ge 2.
\end{align*}

\item {$\la =(a|b)$ typical with $0\le b\le 1$ or $0\le a \le 1$.}
\begin{align*}
&[L(2|0)\otimes L(1|0)] =[L(3|0)] +[L(2|1)] +2[L(1|0)],\\
&[L(l|0)\otimes L(1|0)] =[L(l+1|0)] +[L(l|1)] +[L(l-1|0)],\quad l\ge 3.\\
&[L(1|1)\otimes L(1|0)] =[L(2|1)] +[L(1|2)] +2[L(1|0)],\\
&[L(3|1)\otimes L(1|0)] =[L(4|1)] +[L(3|1)]  +[L(3|0)] \\
&\qquad\qquad\qquad\qquad\quad  +[L(3|2)] +2[L(2|1)] +2[L(1|0)],\\
&[L(l|1)\otimes L(1|0)] =[L(l|1)] +[L(l+1|1)]\\
&\qquad\qquad\qquad\qquad\quad  +[L(l-1|1)] +[L(l|2)] +[L(l|0)],\quad l\ge 4.\\
&[L(1|l)\otimes L(1|0)] =[L(2|l)] +[L(1|l+1)] +[L(1|l-1)],\quad l\ge
2.
\end{align*}

\item {$\la =(l|l)$ typical with $l\ge 2$.}
\begin{align*}
[L(2|2)\otimes L(1|0)] &=[L(3|2)]+2[L(2|1)] +[L(2|2)] \\
&+[L(1|2)] +[L(2|3)] +[L(1|0)] +[L(0|0)].\\
[L(l|l)\otimes L(1|0)] &=[L(l+1|l)] +2[L(l|l-1)] +[L(l|l)]\\
&+[L(l-1|l)] +[L(l|l+1)] +[L(l-1|l-2)].
\end{align*}

\item {$\la =(l+2|l)$ typical with $l \ge 0$.}
\begin{align*}
[L(l+2|l)\otimes L(1|0)] &= [L(l+3|l)] +2[L(l+1|l)] +[L(l+2|l)] \\
&+[L(l|l-1)] +[L(l+2|l+1)] +[L(l+2|l-1)].
\end{align*}

\item {$\la =(a|b)$ typical and $a,b \ge 2$, $a\not=b+2$,
$a\not=b$.}
\begin{align*}
[L(a|b)\otimes L(1|0)] &= [L(a+1|b)] +[L(a|b)] +[L(a-1|b)] \\
&+[L(a|b+1)] +[L(a|b-1)].\\
\end{align*}
\end{enumerate}
\end{example}

\subsection{A conjecture}

It is known that the parametrization set of highest weights of the
irreducible polynomial representations of $\gl(n|m)$ (Sergeev
\cite{Sv1}) is the same as that of the finite-dimensional simple
$\G=\spo(2n|2m+1)$-modules (see Remark~\ref{rem:hook}).
\begin{conjecture}
Consider the maximal parabolic subalgebra $\p$ of
$\G=\spo(2n|2m+1)$ obtained by removing the simple root
$\epsilon_m$ so that $\lev=\gl(n|m)$. Then
\begin{enumerate}
\item The $E^\p(L^0(\la^\sharp))$, with $\la$ running over all partitions with
$\la_{n+1} \le m$, form a basis for the complexified Grothendieck
group of the category of finite-dimensional $\G$-modules.

\item When $\la$ is ``not close" to the zero weight, then the
composition factors for the Euler character $E^\p(L^0(\la^\sharp))$
is the same as that for the Kac module $K(\la^\sharp)$ for
$\gl(n|m)$.
\end{enumerate}
\end{conjecture}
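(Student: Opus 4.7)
The plan is to prove both parts through an inductive analysis based on the degree of atypicality of $\la^\sharp$, using the constraints on composition factors coming from central characters. For part~(1), I would establish the unitriangularity
\[
E^\p(L^0(\la^\sharp)) \;=\; [L(\la^\sharp)] \;+\; \sum_{\mu \prec \la} c_{\la,\mu}\,[L(\mu^\sharp)]
\]
in the complexified Grothendieck group, for a suitable total order $\prec$ on partitions that refines the block decomposition. The coefficient of $[L(\la^\sharp)]$ would be identified by extracting the top-degree contribution of the Weyl-group sum in Serganova's formula against the $\lev$-character of $L^0(\la^\sharp)$ corrected by $\prod_{\alpha\in\Delta^1_{\lev,+}}(1+e^{-\alpha})^{-1}$; when $\la^\sharp$ is already $\G$-dominant, this contribution is $[L(\la^\sharp)]$ with multiplicity one. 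Remark~\ref{rem:euler} places every other composition factor in the block of $L(\la^\sharp)$, and Proposition~\ref{central:char} then forces $\mu^\sharp$ to differ from $\la^\sharp$ by a sum of mutually orthogonal isotropic roots $\delta_i-\epsilon_j$; this translates on the partition side into the removal of matched pairs of boxes, defining a natural strict order $\mu\prec\la$. Invertibility of the resulting unitriangular transition matrix then gives part~(1).

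For part~(2), the strategy is to match this expansion, term by term, against the Kazhdan--Lusztig-type composition series of the Kac module $K(\la^\sharp)$ over $\gl(n|m)$. The crucial structural input is that the odd positive roots of $\lev=\gl(n|m)$ inside $\G=\spo(2n|2m+1)$ are precisely the roots $\delta_i-\epsilon_j$, which are also the odd positive roots of the standard Kac parabolic of $\gl(n|m)$. Consequently the factor $\prod_{\alpha\in\Delta^1_{\lev,+}}(1+e^{-\alpha})$ in Serganova's Euler character formula coincides with the denominator appearing in the Weyl-sum expression for the $\gl(n|m)$ Kac character. My plan is then to decompose the $\G$-Weyl sum into an inner $S_n\times S_m$ part, which reproduces exactly the $\gl(n|m)$ Kac character together with its standard composition series, and an outer part over the sign changes coming from the $C$- and $B$-type factors of $W$. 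The isotropic-root combinatorics from part~(1) picks out the same ``removal of matched boxes'' on both sides, and in the ``not close to zero'' regime the outer sum should contribute through a single $W$-orbit only, delivering the desired equality of composition factor multiplicities.

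The main obstacle is the behaviour of the outer sign-change Weyl sum when $\la$ approaches the zero weight. Example~\ref{ex:incon} and the example immediately following it already show that this outer sum can produce unexpected integer multipliers, such as the factor $2$ observed there, precisely when some parts of $\la$ or of its conjugate $\la'$ vanish; these extra multiplicities are the phenomena that break the $\gl(n|m)$-Kac correspondence. Pinning down a sharp form of ``not close to zero''---plausibly of the shape ``all parts of both $\la$ and $\la'$ exceed explicit thresholds depending only on $n$ and $m$''---and proving that beyond this threshold all additional Weyl-orbit contributions cancel, is the heart of the argument. The amended version of Serganova's algorithm alluded to in the footnote to Example~\ref{ex:incon} is likely the right framework for carrying out this final step uniformly and rigorously.
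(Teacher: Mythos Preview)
The statement you are attempting to prove is labelled \emph{Conjecture} in the paper, and the paper does not prove it. The only supporting evidence offered is the sentence following the conjecture: a comparison of Example~\ref{ex:euler} for $\spo(2|3)$ with the known Kac-module composition series for $\gl(1|1)$ confirms both parts in that single case. There is no general argument in the paper against which to compare your proposal.

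Your write-up is accordingly not a proof but a research plan, and you say so yourself. For part~(1), the unitriangularity claim $E^\p(L^0(\la^\sharp)) = [L(\la^\sharp)] + \text{(lower terms)}$ is plausible, but you have not actually established that the leading coefficient is $1$ rather than, say, $2$; Example~\ref{ex:incon} shows exactly this failure mode for $\spo(2|4)$ and $\spo(2|6)$ at the zero weight, and you give no mechanism that rules it out at nonzero $\la$. For part~(2), you explicitly identify the ``heart of the argument''---controlling the outer sign-change sum and formulating a precise ``not close to zero'' threshold---as an open problem whose resolution you defer to Serganova's unpublished amended algorithm. That is not a gap in an otherwise complete proof; it is the entire difficulty, and it remains unresolved here just as in the paper.
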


The comparison of Example~\ref{ex:euler} with some well-known
facts for $\gl(1|1)$ shows that the Conjecture is true for
$\spo(2|3)$.

\end{document}